\theoremstyle{definition}
\newtheorem{theorem}{Theorem}[section]
\newtheorem{proposition}[theorem]{Proposition}
\newtheorem{lemma}[theorem]{Lemma}
\theoremstyle{definition}
\newtheorem{definition}[theorem]{Definition}
\newtheorem{example}[theorem]{Example}
\newtheorem{remark}[theorem]{Remark}
\theoremstyle{remark}
\numberwithin{equation}{section}
\newcommand\RR{\mathbb{R}}
\newcommand\bla{\boldsymbol{\Lambda}}
\newcommand\by{\boldsymbol{y}}
\renewcommand\bf{\boldsymbol{f}}
\newcommand\bk{\boldsymbol{k}}
\newcommand\bw{\boldsymbol{w}}
\newcommand\bx{\boldsymbol{x}}
\newcommand\bv{\boldsymbol{v}}
\newcommand\bJ{\boldsymbol{J}}
\DeclareMathOperator{\codim} {codim}
\newcommand{\mK}{\mathcal{K}}
\newcommand{\dK}{\mathcal{K}_{\RR\text{-disg}}}
\newcommand{\pK}{\mathcal{K}_{\text{disg}}}
\newcommand{\mJ}{\mathcal{J}_{\RR}}
\newcommand{\eJ}{\mathcal{J}_{\textbf{0}}}
\newcommand{\mD}{\mathcal{D}_{\textbf{0}}}
\newcommand{\mS}{\mathcal{S}}
\newenvironment{breakablealgorithm}
  {% \begin{breakablealgorithm}
   % \begin{center}
     \refstepcounter{algorithm}% New algorithm
     \hrule height.8pt depth0pt \kern2pt% \@fs@pre for \@fs@ruled
     \renewcommand{\caption}[2][\relax]{% Make a new \caption
       {\raggedright\textbf{\fname@algorithm~\thealgorithm} ##2\par}%
       \ifx\relax##1\relax % #1 is \relax
         \addcontentsline{loa}{algorithm}{\protect\numberline{\thealgorithm}##2}%
       \else % #1 is not \relax
         \addcontentsline{loa}{algorithm}{\protect\numberline{\thealgorithm}##1}%
       \fi
       \kern2pt\hrule\kern2pt
     }
  }{% \end{breakablealgorithm}
     \kern2pt\hrule\relax% \@fs@post for \@fs@ruled
   % \end{center}
  }
\newcommand{\tJ}{\widetilde{\mathcal{J}}}
\newcommand{\tmJ}{\mathfrak{J}_{\RR}}
\newcommand\balpha{\boldsymbol{\alpha}}
\newcommand\bbeta{\boldsymbol{\beta}}
\newcommand{\defi}{\textbf}
\DeclareMathOperator{\spn}{span}
\DeclareMathOperator{\rank}{rank}
\newcommand{\customlabel}[2]{%
\protected@write \@auxout {}{\string \newlabel {#1}{{#2}{}}}}
\begin{document}

\title{ 
The Computation of the Disguised Toric Locus of Reaction Networks
}

\author[1]{
         Gheorghe Craciun
}
\author[2]{
        Abhishek Deshpande
}
\author[3]{
        Jiaxin Jin
}
\affil[1]{\small Department of Mathematics and Department of Biomolecular Chemistry, \protect \\ University of Wisconsin-Madison}
\affil[2]{Center for Computational Natural Sciences and Bioinformatics, \protect \\
 International Institute of Information Technology Hyderabad}
\affil[3]{\small Department of Mathematics, University of Louisiana at Lafayette}

\date{} 

\maketitle

\begin{abstract}

\noindent
Mathematical models of reaction networks can exhibit very complex dynamics, including multistability, oscillations, and chaotic dynamics. On the other hand, under some additional assumptions on the network or on parameters values, these  models may actually be {\em toric dynamical systems}, which have remarkably stable  dynamics. The concept of ``\emph{disguised} toric dynamical system" was introduced in order to describe the phenomenon where a reaction network generates toric dynamics without actually being toric; such  systems enjoy all the stability properties of toric dynamical systems but with much fewer restrictions on the networks and parameter values.  
The \emph{disguised toric locus} is the set of parameter values for which the corresponding dynamical system is a disguised toric system.
Here we focus on providing a generic and efficient method for computing the dimension of the disguised toric locus of reaction networks.
Additionally, we illustrate our approach by applying it to some specific models of biological interaction networks, including Brusselator-type networks, Thomas-type networks, and  circadian clock networks.
\end{abstract}

\begin{NoHyper}
\tableofcontents
\end{NoHyper}

\section{Introduction}
\label{sec:intro}

Mathematical models of biochemical reaction networks are commonly described by polynomial dynamical systems \cite{yu2018mathematical,feinberg2019foundations, craciun2022homeostasis}.
Studying the dynamical properties of these networks is essential for understanding the behavior of chemical and biological systems \cite{yu2018mathematical,feinberg2019foundations, CraciunDickensteinShiuSturmfels2009, craciun2022autocatalytic}.
In general, analyzing these systems is a challenging problem. Classical nonlinear dynamical properties, such as multistability, oscillations, and chaotic dynamics, are difficult to investigate \cite{Ilyashenko2002, yu2018mathematical}.

There is a particular class of dynamical systems generated by reaction networks, known as \emph{complex-balanced systems} \cite{horn1972general} (or \emph{toric dynamical systems} \cite{CraciunDickensteinShiuSturmfels2009}), which are known for their remarkably robust dynamics.
In particular, all positive steady states in these systems are locally asymptotically stable~\cite{horn1972general, yu2018mathematical}. Further, these systems admit a unique positive steady state  within each affine invariant polyhedron; moreover, oscillations or chaotic dynamics are excluded for this class of systems \cite{horn1972general}.

However, the classical theory of complex-balanced systems has a limitation: to obtain a large set of parameter values (i.e., choices of reaction rate constants) that result in a complex-balanced system, the reaction network must satisfy additional graphical properties, namely \emph{weak reversibility} and \emph{low deficiency} (see \cite{yu2018mathematical} for details).
This limitation motivates the study of the notion of {\em dynamical equivalence}, which facilitates a significant relaxation of both restrictions.
Dynamical equivalence is based on the idea that two different reaction networks can generate the same dynamics for appropriately chosen parameter values. This phenomenon has also been referred to as \emph{macro-equivalence} \cite{horn1972general} or \emph{confoundability} \cite{craciun2008identifiability}.

The concept of a \emph{disguised toric locus} was first introduced in \cite{2022disguised}.
The disguised toric locus of a reaction network $G$ is the set of positive reaction rate vectors for which the corresponding dynamical system can be realized as a complex-balanced system by some network $G'$.
In other words, this locus consists of positive reaction rate vectors $\bk$ such that the mass-action system $(G, \bk)$ is dynamically equivalent to a complex-balanced system $(G', \bk')$.
Systems with reaction rate vectors in the disguised toric locus are called \emph{disguised toric systems}, as they are dynamically equivalent to complex-balanced systems.
Several general properties of the disguised toric locus have been established~\cite{disg_3, disg_1, disg_2}. For example, it was shown in~\cite{disg_1} that the disguised toric locus is path-connected.
In a recent paper~\cite{disg_3}, we derived a formula (see Theorem \ref{thm:dim_kisg_main}) for the dimension of the disguised toric locus.

In this paper, we develop a detailed approach for efficiently computing this formula. This is important and valuable because the formula established in~\cite{disg_3} is purely theoretical, and translating it into a computationally feasible method has remained unclear, especially when dealing with large reaction networks.
A key challenge in this context is computing the dimension of the term $\mJ(G', G)$ within the formula. This term represents the set of reaction rate constants $\bk'$ for the network $G'$ such that the system $(G', \bk')$ is both complex-balanced and $\RR$-realizable on the original network $G$. Specifically, we express it as
\begin{equation} \notag
\mJ(G',G) = \underbrace{ \mathcal{J} (G') }_{\text{complex-balanced}} \cap \ \underbrace{ \tmJ (G', G)}_{\text{$\RR$-realizable on $G$}}.
\end{equation}
Thus, computing the dimension of $\mJ(G', G)$ requires careful consideration of two distinct types of restrictions. One of the main results of this paper, Theorem~\ref{thm:main}, proves that these two restrictions are independent in terms of dimension. Moreover, we propose an algorithm (see Algorithm~\ref{algo:dim}) that efficiently computes the dimension of $\tmJ (G', G)$. This algorithm streamlines the process, making it feasible to apply to arbitrarily large networks.
In conjunction with the classic results on complex-balanced systems, Theorem~\ref{thm:main} provides a straightforward formula for computing the dimension of the disguised toric locus of an E-graph.

To summarize, our work advances the theoretical framework of the disguised toric locus, making it accessible for real-world applications in reaction networks across diverse fields. In the application section, we demonstrate the utility of our approach by applying it to several biological models, including Brusselator-type, Thomas-type, and circadian clock models.

\bigskip

\textbf{Structure of the paper.}
In Section~\ref{sec:background}, we review key definitions from reaction network theory, including dynamical equivalence, flux systems, and flux equivalence.
Section~\ref{sec:disguised_locus} revisits the definitions of the $\RR$-disguised toric locus and the disguised toric locus.
In particular, we recall the dimension formula for the two loci (see Theorem~\ref{thm:dim_kisg_main}). In Section~\ref{sec:main}, we focus on computing the dimension of the disguised toric locus, which decomposes into the sum of multiple terms. The main results of this paper, Theorem~\ref{thm:main} and Algorithm~\ref{algo:dim}, provide an effective method for computing each term.
In Section~\ref{sec:applications}, we illustrate our computation of the dimension of the disguised toric loci for the following biological models: Brusselator-type models, Thomas-type models, and Circadian clock models.
Finally, Section~\ref{sec:discussion} summarizes our findings and outlines directions for future research.

\bigskip

\textbf{Notation.}
Let $\mathbb{R}_{\geq 0}^n$ and $\mathbb{R}_{>0}^n$ denote the set of vectors in $\mathbb{R}^n$ with non-negative entries and positive entries, respectively. For vectors $\bx = (\bx_1, \ldots, \bx_n)^{\intercal}\in \RR^n_{>0}$ and $\by = (\by_1, \ldots, \by_n)^{\intercal} \in \RR^n$, define the following notation:
\begin{equation} \notag
\bx^{\by} = \bx_1^{y_{1}} \ldots \bx_n^{y_{n}}.
\end{equation}
For E-graphs (see Definition \ref{def:e-graph}), let $G$ denote an arbitrary E-graphs, and let $G'$ denote a weakly reversible E-graph.

\section{Background}
\label{sec:background}

This section overviews essential concepts and results in reaction network theory.

\subsection{Reaction Networks and Dynamical Equivalence}
\label{sec:reaction_networks}

We start by defining reaction networks and the concept of dynamical equivalence~\cite{craciun2008identifiability,horn1972general, kothari2024realizations,kothari2024endotactic,WR_DEF_THM,WR_df_1,deshpande2022source}, followed by a review of relevant terminology and basic properties.

\begin{definition}[\cite{craciun2015toric, craciun2019polynomial,craciun2020endotactic}]
\label{def:e-graph}

\begin{enumerate}[label=(\alph*)]
\item A \textbf{reaction network} $G = (V, E)$, also known as the \textbf{Euclidean embedded graph (or E-graph)}, is a directed graph in $\RR^n$, where $V \subset \mathbb{R}^n$ is a finite set of vertices and $E \subseteq V \times V$ is a finite set of edges.

\item A reaction in the network (denoted by $\by \rightarrow \by' \in E$) corresponds to a directed edge $(\by, \by') \in E$. Here, $\by$ is referred to as the \textbf{source vertex}, and $\by'$ as the \textbf{target vertex}. The difference vector $\by' - \by \in\mathbb{R}^n$ is called the \textbf{reaction vector}.
\end{enumerate}
\end{definition}

\begin{definition}

Let $G=(V, E)$ be an E-graph.

\begin{enumerate}[label=(\alph*)]
\item A set of vertices in $V$ is called a \textbf{linkage class} if it forms a connected component of $G$.
A linkage class is said to be \textbf{strongly connected} if every edge is part of a directed cycle. 
Furthermore, $G$ is said to be \textbf{weakly reversible} if every linkage class is strongly connected.

\item $G$ is called a \textbf{complete graph} if $\by \rightarrow \by' \in E$ for every pair of vertices $\by, \by' \in V$. For any E-graph $G$, there exists a complete graph (denoted by $G_c$) obtained by connecting every pair of vertices in $V$, and $G_c$ is referred to as the complete graph on $G$.

\item An E-graph $G' = (V', E')$ is said to be a \textbf{subgraph} of $G$ (denoted by $G' \subseteq G$) if $V' \subseteq V$ and $E' \subseteq E$.
Furthermore, if $G'$ is a weakly reversible subgraph of $G$, we denote this by $G' \sqsubseteq G$. 
\end{enumerate}
\end{definition}

\begin{definition}[\cite{adleman2014mathematics,guldberg1864studies,voit2015150,gunawardena2003chemical,yu2018mathematical,feinberg1979lectures}]

Let $G=(V, E)$ be an E-graph. Denote a \textbf{reaction rate vector} by
\[
\bk :=(k_{\by\to \by'})_{\by\to \by' \in E} \in \mathbb{R}_{>0}^{|E|}.
\]
Then $(G, \bk)$ generates a \textbf{mass-action dynamical system} on $\RR_{>0}^n$ given by
\begin{equation}
\label{def:mas_ds}
\frac{d\bx}{dt} = \displaystyle\sum_{\by \rightarrow \by' \in E}k_{\by\rightarrow\by'}{\bx}^{\by}(\by'-\by).
\end{equation}
The \textbf{stoichiometric subspace} of $G$ is defined as the span of its reaction vectors, that is,
\begin{equation} \notag
\mathcal{S}_G = \spn \{ \by' - \by: \by \rightarrow \by' \in E \}.
\end{equation}
Any solution to \eqref{def:mas_ds} with initial condition $\bx_0 \in \mathbb{R}_{>0}^n$ and $V \subset \mathbb{Z}_{\geq 0}^n$ is confined to the set $(\bx_0 + \mathcal{S}_G) \cap \mathbb{R}_{>0}^n$, and thus $(\bx_0 + \mathcal{S}_G) \cap \mathbb{R}_{>0}^n$ is called the \textbf{invariant polyhedron} of $\bx_0$.
\end{definition}

\begin{definition}

Let $(G, \bk)$ be a mass-action system \eqref{def:mas_ds}. 
A state $\bx^* \in \mathbb{R}^n_{>0}$ is called a \defi{positive steady state} of the system if 
\begin{equation} \notag
\displaystyle\sum_{\by\rightarrow \by' \in E } k_{\by\rightarrow\by'}{(\bx^*)}^{\by}(\by'-\by) = \mathbf{0}.
\end{equation}
A positive steady state $\bx^* \in \mathbb{R}^n_{>0}$ is called a \defi{complex-balanced steady state} of the system if for every vertex $\by_0 \in V$,
\begin{equation} \notag
\sum_{\by_0 \rightarrow \by \in E} k_{\by_0 \rightarrow \by} {(\bx^*)}^{\by_0}
= \sum_{\by' \rightarrow \by_0 \in E} k_{\by' \rightarrow \by_0} {(\bx^*)}^{\by'}.
\end{equation}
If a mass-action system $(G, \bk)$ admits a complex-balanced steady state, it is called a \textbf{toric dynamical system}.
\end{definition}

\begin{remark}

Toric dynamical systems are known for their robust graphical and dynamical properties. In \cite{horn1972general}, it was proven that all toric dynamical systems are weakly reversible, and all complex-balanced steady states are locally asymptotically stable and unique within each affine invariant polyhedron. Furthermore, toric dynamical systems are closely associated with the \emph{Global Attractor Conjecture}, which proposes that such systems have a globally attracting steady state within each stoichiometric compatibility class. Various special cases of this conjecture have been proved~\cite{anderson2011proof, pantea2012persistence, craciun2013persistence, boros2020permanence}. An approach for a proof of this conjecture in full generality  has been proposed in~\cite{craciun2015toric}, using the idea of toric differential inclusions~\cite{craciun2019quasi,craciun2020endotactic,ding2021minimal,ding2022minimal}.
\end{remark}

\begin{definition}
\label{def:de}

Let $(G, \bk)$ and $(G, \bk')$ be two mass-action systems.
Then $(G, \bk)$ and $(G', \bk')$ are said to be \defi{dynamically equivalent} if for every vertex\footnote{\label{footnote1} Note that when $\by_0 \not\in V$ or $\by_0 \not\in V'$, the corresponding side is considered as an empty sum.} $\by_0 \in V \cup V'$ we have
\begin{equation} \notag
\sum_{\by_0 \rightarrow \by\in E} k_{\by_0 \rightarrow \by} (\by - \by_0) 
= \sum_{\by_0 \rightarrow \by'\in E'} k'_{\by_0 \rightarrow\by'} (\by' - \by_0).
\end{equation}
We denote $(G,\bk) \sim (G', \bk')$ if the mass-action systems $(G,\bk)$ and $(G',\bk')$ are dynamically equivalent.
\end{definition}

\begin{definition} 
\label{def:d0}

Let $G = (V, E)$ be an E-graph let $\bla = (\Lambda_{\by \to \by'})_{\by \to \by' \in E} \in \RR^{|E|}$. The set $\mD (G)$ is defined as
\begin{equation} \notag
\mD (G):=
\{\bla \in \RR^{|E|} \ \Big| \ \sum_{\by_0 \to \by \in E} \Lambda_{\by_0  \to \by} (\by - \by_0) = \mathbf{0} \ \text{ for every vertex } \by_0 \in V
\}.
\end{equation}
\end{definition} 

\begin{remark}
\label{rmk:d0}

Given an E-graph $G = (V, E)$, Definition \ref{def:d0} implies that the set $\mD (G)$ is a linear subspace. For each vertex $\by_0 \in V$ and the corresponding reactions $\{ \by_0 \to \by \}_{\by_0 \to \by \in E}$, consider the matrix $M_{\by_0}$ whose columns are the reaction vectors associated with these reactions. Then, for every $\bla \in \mD (G)$, $\bla_{\by_0 \to \by}$ belongs to the kernel of the matrix $M_{\by_0}$.
\end{remark}

\begin{lemma}[\cite{disg_2}]
\label{lem:d0}

Let $(G, \bk)$ and $(G, \bk')$ be two mass-action systems. Then $(G, \bk) \sim (G, \bk')$ if and only if $\bk' - \bk \in \mD (G)$.    
\end{lemma}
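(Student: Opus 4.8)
The plan is to prove both implications simultaneously by unwinding the two definitions involved, since every step in the argument is an equivalence. The key observation is that both mass-action systems are supported on the \emph{same} E-graph $G = (V,E)$, so in Definition \ref{def:de} we may take $G' = G$, which forces $V' = V$ and $E' = E$; in particular $V \cup V' = V$, and the footnote about empty sums never comes into play because each side of the defining identity ranges over exactly the same set of outgoing edges.

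First I would write out the definition of dynamical equivalence $(G,\bk) \sim (G,\bk')$ explicitly: for every vertex $\by_0 \in V$,
\begin{equation} \notag
\sum_{\by_0 \to \by \in E} k_{\by_0 \to \by}\,(\by - \by_0)
= \sum_{\by_0 \to \by \in E} k'_{\by_0 \to \by}\,(\by - \by_0).
\end{equation}
Since both sums are taken over the identical index set $\{\by : \by_0 \to \by \in E\}$, I can combine them termwise and move everything to one side, obtaining that the displayed identity holds for every $\by_0 \in V$ if and only if
\begin{equation} \notag
\sum_{\by_0 \to \by \in E} (k'_{\by_0 \to \by} - k_{\by_0 \to \by})\,(\by - \by_0) = \mathbf{0}
\qquad \text{for every } \by_0 \in V.
\end{equation}

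Next I would compare this with Definition \ref{def:d0}: setting $\bla := \bk' - \bk \in \RR^{|E|}$, so that $\Lambda_{\by_0 \to \by} = k'_{\by_0 \to \by} - k_{\by_0 \to \by}$, the condition above is verbatim the defining condition for $\bla \in \mD(G)$. Hence $(G,\bk) \sim (G,\bk')$ holds precisely when $\bk' - \bk \in \mD(G)$, which is the claim. Both directions follow at once because each manipulation above (expanding the definition, subtracting, relabeling) is reversible.

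There is essentially no real obstacle here: the only point requiring a moment's care is confirming that the quantifier "for every vertex $\by_0 \in V \cup V'$" in Definition \ref{def:de} collapses to "for every $\by_0 \in V$" in this same-graph setting, and that the two sides genuinely range over the same edges so that the subtraction is legitimate term by term; once that is noted, the equivalence is immediate from matching Definitions \ref{def:de} and \ref{def:d0}.
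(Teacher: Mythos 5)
Your proof is correct: with $G'=G$ the quantifier in Definition \ref{def:de} collapses to $\by_0\in V$, the two sums range over the same edges, and subtracting termwise shows the equivalence condition is verbatim the membership condition $\bk'-\bk\in\mD(G)$ from Definition \ref{def:d0}. The paper itself does not reproduce a proof (the lemma is cited from an earlier work), but your definition-unwinding argument is exactly the intended one, so there is nothing further to add.
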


Lemma \ref{lem:d0} indicates that $\mD(G) \subseteq \RR^{|E|}$ consists of the rate vectors on the E-graph $G$ that preserve the dynamical system under the dynamical equivalence.
Specifically, for any mass-action system $(G, \bk)$ and any vector $\bla \in \mD(G)$, if $\bk + \bla \in \RR^{|E|}_{>0}$, then the systems $(G, \bk)$ and $(G, \bk + \bla)$ are dynamically equivalent.
The following example illustrates the computation of $\mD$ for a given E-graph.

\begin{example}
\label{ex:d0}

Figure~\ref{fig:running_example} illustrates two E-graphs $G = (V, E)$ and $G' = (V', E')$. We now present the computation of $\mD$ for both E-graphs. 

\begin{figure}[!ht]
\centering
\includegraphics[scale=0.4]{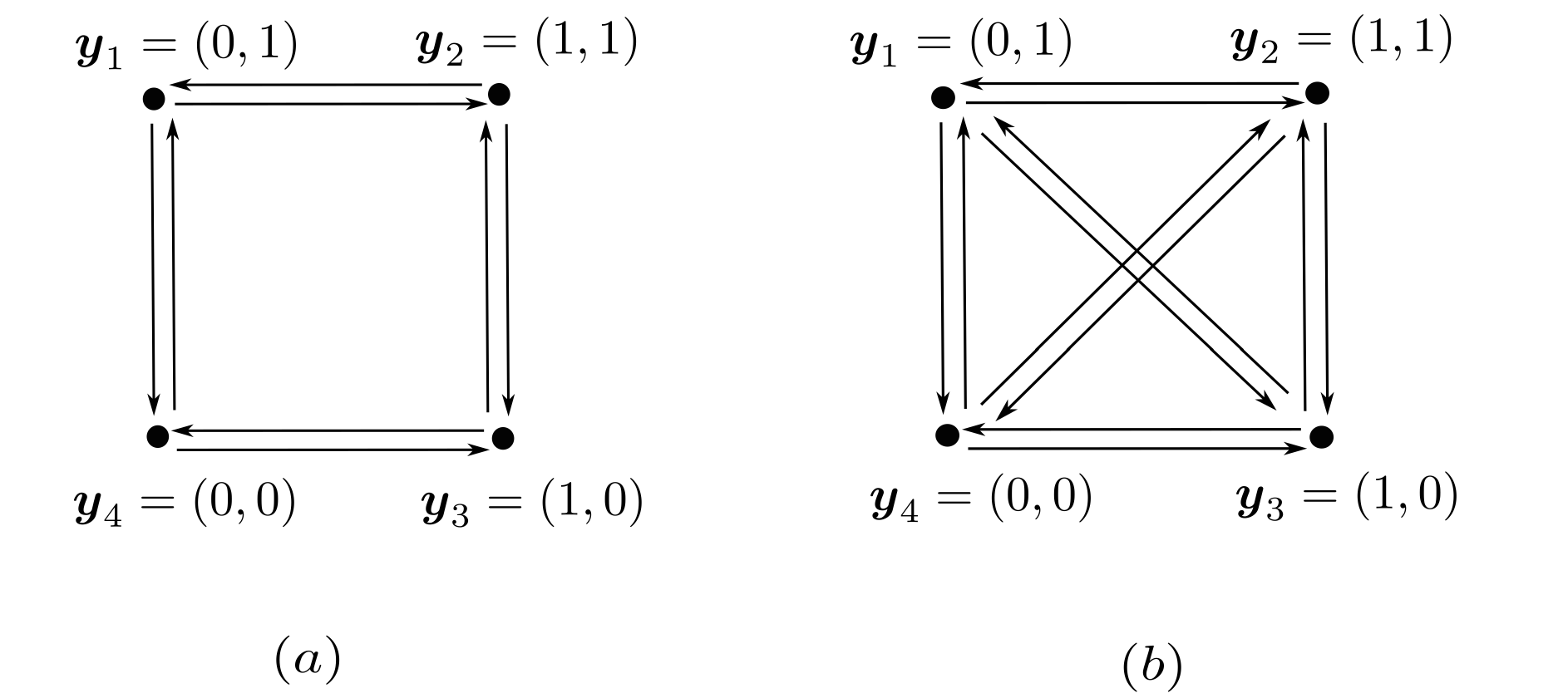}
\caption{Two E-graphs $G = (V, E)$ and $G' = (V', E')$.}
\label{fig:running_example}
\end{figure} 

(a) 
For each vertex $\by_i \in V$, the corresponding reaction vectors $\{ \by_j - \by_i \}_{\by_i \to \by_j \in E}$ are linearly independent. Remark~\ref{rmk:d0} implies that the kernel of the matrix $M_{\by_i}$ contains only the zero vector. Therefore,
\[
\mD (G) = \{ \mathbf{0} \}.
\]

(b) 
Following Remark~\ref{rmk:d0}, the matrix $M_{\by_i}$ can be constructed for each vertex $\by_i$. The kernels of these matrices are given as follows:

\begin{enumerate}
\item[(i)] For the vertex $\by_1$, the non-zero kernel vector of $M_{\by_1}$ is given by:
\begin{equation} \notag
\bv_{1, r} = 
\begin{cases}
1, & \text{ if } r = \by_1 \to \by_2 \text{ or } \by_1 \to \by_4, \\
-1, & \text{ if } r = \by_1 \to \by_3.
\end{cases}
\end{equation}

\item[(ii)]  For the vertex $\by_2$, the non-zero kernel vector of $M_{\by_2}$ is given by:
\begin{equation} \notag
\bv_{2, r} = 
\begin{cases}
1, & \text{ if } r = \by_2 \to \by_1 \text{ or } \by_2 \to \by_3, \\
-1, & \text{ if } r = \by_2 \to \by_4.
\end{cases}
\end{equation}

\item[(iii)] For the vertex $\by_3$, the non-zero kernel vector of $M_{\by_3}$ is given by:
\begin{equation} \notag
\bv_{3, r} = 
\begin{cases}
1, & \text{ if } r = \by_3 \to \by_2 \text{ or } \by_3 \to \by_4, \\
-1, & \text{ if } r = \by_3 \to \by_1.
\end{cases} 
\end{equation}

\item[(iv)] For the vertex $\by_4$, the non-zero kernel vector of $M_{\by_4}$ is given by:
\begin{equation} \notag
\bv_{4, r} = 
\begin{cases}
1, & \text{ if } r = \by_4 \to \by_1 \text{ or } \by_4 \to \by_3, \\
-1, & \text{ if } r = \by_4 \to \by_2.
\end{cases}
\end{equation}
\end{enumerate}

Remark~\ref{rmk:d0} shows that $\bla_{\by_i \to \by}$ belongs to the kernel of the matrix $M_{\by_i}$ for every $\bla \in \mD (G)$.
Let $\bv'_1, \ldots. \bv'_4$ denote the vectors obtained by extending the dimension of $\bv_1, \ldots, \bv_4$ to $|E'|$ by appending zeros, respectively.
Therefore,
\[
\mD(G') = \spn \{ \bv'_1, \bv'_2, \bv'_3, \bv'_4 \}
\ \text{ and } \
\dim(\mD (G')) = 4.
\]
\qed
\end{example}

\subsection{Flux Systems and Flux Equivalence} 
\label{sec:flux_systems}

In this subsection, we introduce flux systems and the concept of flux equivalence.

\begin{definition}
 \label{def:f_s}
 
Let $G=(V, E)$ be an E-graph. Denote a \textbf{flux vector} by
\[
\bJ :=(J_{\by\to \by'})_{\by\to \by' \in E} \in \mathbb{R}_{>0}^{|E|}.
\]
Then $(G, \bJ)$ generates a \textbf{flux system} on $\RR_{>0}^n$ given by
\begin{equation}  \label{eq:f_s}
\frac{d\bx}{dt} = \displaystyle\sum_{\by \rightarrow \by' \in E} J_{\by \rightarrow \by'} (\by'-\by).
\end{equation}
\end{definition}

\begin{definition}

Let $G = (V, E)$ be an E-graph. 
A flux vector $\bJ^* \in \mathbb{R}^{|E|}_{>0}$ is called a \defi{steady flux vector} of $G$ if 
\begin{equation} \notag
\displaystyle\sum_{\by\rightarrow \by' \in E } J^*_{\by\rightarrow\by'} (\by'-\by) = \mathbf{0}.
\end{equation}
A steady flux vector $\bJ^* \in \mathbb{R}^{|E|}_{>0}$ is called a \defi{complex-balanced flux vector} of $G$ if for every vertex $\by_0 \in V$,
\begin{equation} \notag
\sum_{\by_0 \rightarrow \by \in E} J^*_{\by_0 \rightarrow \by} = \sum_{\by' \rightarrow \by_0 \in E} J^*_{\by' \rightarrow \by_0}.
\end{equation}
Let $\mathcal{J}(G)$ denote the set of all complex-balanced flux vectors of $G$, defined as follows:
\begin{equation} \notag
\mathcal{J}(G):=
\{\bJ \in \RR_{>0}^{|E} \ \big| \ \bJ \text{ is a complex-balanced flux vector of $G$} \}.
\end{equation}
\end{definition}

\begin{definition}

Let $(G,\bJ)$ and $(G', \bJ')$ be two flux systems. Then $(G,\bJ)$ and $(G', \bJ')$ are said to be \defi{flux equivalent} if for every vertex\footref{footnote1} $\by_0 \in V \cup V'$,
\begin{equation} \notag
\sum_{\by_0 \to \by \in E} J_{\by_0 \to \by} (\by - \by_0) 
= \sum_{\by_0 \to \by' \in E'} J'_{\by_0 \to \by'} (\by' - \by_0).
\end{equation}
We denote $(G, \bJ) \sim (G', \bJ')$ if two flux systems $(G, \bJ)$ and $(G', \bJ')$ are flux equivalent.
\end{definition}

\begin{definition} 
\label{def:j0}

Let $G = (V, E)$ be an E-graph and let $\bJ = ({J}_{\by \to \by'})_{\by \to \by' \in E} \in \RR^{|E|}$. Recall $\mD (G)$ in Definition \ref{def:d0}, the set $\eJ (G)$ is defined as
\begin{equation} \notag
\eJ (G) :=
\{{\bJ} \in \mD (G) \ \Big| \ \sum_{\by \to \by_0 \in E} {J}_{\by \to \by_0} 
= \sum_{\by_0 \to \by' \in E} {J}_{\by_0 \to \by'} \ \text{for every vertex } \by_0 \in V
\}.
\end{equation}
\end{definition} 

\begin{remark}
\label{rmk:j0}

Given an E-graph $G = (V, E)$, Remark \ref{rmk:d0} and Definition \ref{def:j0} show that the set $\eJ (G)$ is a subset of $\mD (G)$ and a linear subspace of $\mathbb{R}^{|E|}$.
Hence, $\eJ (G)$ satisfies the properties of $\mD (G)$ introduced in Remark \ref{rmk:d0}.
Suppose $V = \{ \by_1, \ldots, \by_m \}$ and index the $|E|$ reactions in $G$. Consider the matrix $B \in \mathbb{R}^{m \times |E|}$ as follows:
\[
B_{ij} =
\begin{cases}
1 & \text{ if $\by_i$ is the source vertex in the $j$-th reaction}, \\
-1 & \text{ if $\by_i$ is the target vertex in the $j$-th reaction}, \\
0 & \text{ otherwise}.
\end{cases}
\]
Then $\eJ (G)$ belongs to the kernel of the matrix $B$.
\end{remark}

\begin{lemma}[\cite{disg_2}]
\label{lem:j0}
Let $(G, \bJ)$ and $(G, \bJ')$ be two flux systems. Then 
\begin{enumerate}
\item[(a)] $(G, \bJ) \sim (G, \bJ')$ if and only if $\bJ' - \bJ \in \mD (G)$.

\item[(b)] If both $\bJ$ and $\bJ'$ are complex-balanced flux vector of $G$, then $(G, \bJ) \sim (G, \bJ')$ if and only if $\bJ' - \bJ \in \eJ(G)$.
\end{enumerate} 
\end{lemma}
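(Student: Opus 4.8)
The plan is to reduce both parts to linear identities obtained by directly unwinding the definitions, in close parallel with the proof of Lemma~\ref{lem:d0}.

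For part (a), I would first observe that since $(G, \bJ)$ and $(G, \bJ')$ are supported on the \emph{same} E-graph, $V \cup V' = V$, and the flux-equivalence condition reads: for every $\by_0 \in V$,
\[
\sum_{\by_0 \to \by \in E} J_{\by_0 \to \by}(\by - \by_0) = \sum_{\by_0 \to \by \in E} J'_{\by_0 \to \by}(\by - \by_0).
\]
By linearity this is equivalent to $\sum_{\by_0 \to \by \in E}(J'_{\by_0 \to \by} - J_{\by_0 \to \by})(\by - \by_0) = \mathbf{0}$ for every $\by_0 \in V$, which is precisely the condition in Definition~\ref{def:d0} defining membership $\bJ' - \bJ \in \mD(G)$. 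So (a) is immediate.

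For part (b), the direction ($\Leftarrow$) follows at once: if $\bJ' - \bJ \in \eJ(G)$ then $\bJ' - \bJ \in \mD(G)$ by Remark~\ref{rmk:j0}, and part (a) yields $(G, \bJ) \sim (G, \bJ')$ — note this half does not use the complex-balanced hypothesis at all. For ($\Rightarrow$), I would start from $(G, \bJ) \sim (G, \bJ')$, invoke part (a) to get $\bJ' - \bJ \in \mD(G)$, and then only need to verify the in-flux/out-flux balance appearing in the definition of $\eJ(G)$. This is exactly where the hypothesis enters: for each vertex $\by_0 \in V$, write the complex-balance identity $\sum_{\by_0 \to \by \in E} J_{\by_0 \to \by} = \sum_{\by' \to \by_0 \in E} J_{\by' \to \by_0}$ for $\bJ$, and the analogous one for $\bJ'$; subtracting the two gives
\[
\sum_{\by_0 \to \by \in E}(J'_{\by_0 \to \by} - J_{\by_0 \to \by}) = \sum_{\by' \to \by_0 \in E}(J'_{\by' \to \by_0} - J_{\by' \to \by_0}),
\]
which is precisely the defining condition of $\eJ(G)$ for the vector $\bJ' - \bJ$. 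Together with $\bJ' - \bJ \in \mD(G)$ this yields $\bJ' - \bJ \in \eJ(G)$, completing (b).

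I do not anticipate a genuine obstacle: the entire argument is linear algebra once the definitions are unpacked. The only point needing minor care is the empty-sum convention for a vertex belonging to just one of the two graphs (the footnote in Definition~\ref{def:de}), but this is vacuous in the present setting since both flux systems live on the same $G$. The sole structural input is the definition of a complex-balanced flux vector, used exactly once, in the forward direction of (b).
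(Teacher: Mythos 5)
Your proof is correct: both parts follow exactly as you describe by unwinding Definitions~\ref{def:d0} and~\ref{def:j0}, and the paper itself offers no proof of Lemma~\ref{lem:j0}, citing it from prior work instead. Your argument is the standard one that the cited result rests on, and your observations that the reverse direction of (b) never uses the complex-balanced hypothesis and that the empty-sum convention is vacuous here are both accurate.
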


Similarly to Lemma \ref{lem:d0}, Lemma \ref{lem:j0} shows that $\eJ (G) \subseteq \RR^{|E|}$ consists of the flux vectors on the E-graph $G$ that preserve both the complex-balanced property and the dynamical system under dynamical equivalence.
The following example illustrates the computation of $\eJ$ for a given E-graph.

\begin{example}
\label{ex:j0}

Revisit two E-graphs $G = (V, E)$ and $G' = (V', E')$ in Figure \ref{fig:running_example}.
The computation of $\eJ$ for both E-graphs is presented below.

\smallskip

(a) 
Definition \ref{def:j0} implies that $\eJ (G) \subseteq \mD (G)$. From Example \ref{ex:d0}, $\mD (G) = \{ \mathbf{0} \}$ and thus
\[
\eJ (G) = \{ \mathbf{0} \}.
\]

(b)
Recall from Example \ref{ex:d0} that $\mD(G') = \spn \{ \bv'_1, \bv'_2, \bv'_3, \bv'_4 \}$. Based on Remark~\ref{rmk:j0}, we construct the matrix $B_{G'}$ associated with $G'$, and derive that
\[
\{ \bv'_1 + \bv'_2, \ \ \bv'_1 - \bv'_3, \ \ \bv'_1 + \bv'_4 \} \subseteq \ker (B_{G'}) \cap \mD(G').
\]
Therefore,
\[
\eJ (G') = \spn \{ \bv_1 + \bv_2,  \bv_1 - \bv_3, \ \bv_1 + \bv_4 \}
\ \text{ and } \
\dim (\eJ(G')) =  3.
\]
\qed
\end{example}

At the end of this section, we present the following proposition, which establishes the relationship between dynamical equivalence and flux equivalence.

\begin{proposition}[\cite{craciun2020efficient}]
\label{prop:craciun2020efficient}

Let $(G, \bk)$ and $(G, \bk')$ be two mass-action systems. For $\bx \in \RR_{>0}^n$, define the flux vector $\bJ (\bx) = (J_{\by \to \by'})_{\by \to \by' \in E}$ on $G$, such that for every $\by \to \by' \in E$,
\begin{equation} \notag
J_{\by \to \by'} = k_{\by \to \by'} \bx^{\by}.
\end{equation}
Further, define the flux vector $\bJ' (\bx) = (J'_{\by \to \by'})_{\by \to \by' \in E'}$ on $G'$, such that for every $\by \to \by' \in E$,
\begin{equation} \notag
J'_{\by \to \by'} = k'_{\by \to \by'} \bx^{\by}.
\end{equation} 
Then the following are equivalent:
\begin{enumerate}
\item[(a)] The mass-action systems $(G, \bk)$ and $(G', \bk')$ are dynamically equivalent.

\item[(b)] The flux systems $(G, \bJ(\bx))$ and $(G', \bJ')$ are flux equivalent for all $\bx \in \RR_{>0}^n$.

\item[(c)] The flux systems $(G, \bJ(\bx))$ and $(G', \bJ'(\bx))$ are flux equivalent for some $\bx \in \RR_{>0}^n$
\end{enumerate}
\end{proposition}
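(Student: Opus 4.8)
The plan is to establish the cycle of implications $(a)\Rightarrow(b)\Rightarrow(c)\Rightarrow(a)$, with the entire argument resting on one elementary observation: for the flux vectors induced by mass-action kinetics, every flux leaving a fixed source vertex $\by_0$ carries the common monomial factor $\bx^{\by_0}$. Concretely, for each $\by_0\in V$ one has $\sum_{\by_0\to\by\in E} J_{\by_0\to\by}(\bx)(\by-\by_0) = \bx^{\by_0}\sum_{\by_0\to\by\in E} k_{\by_0\to\by}(\by-\by_0)$, since $J_{\by_0\to\by}(\bx)=k_{\by_0\to\by}\bx^{\by_0}$ for every edge out of $\by_0$; the analogous identity holds on $G'$ with $\bk'$ and $\bJ'(\bx)$. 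When $\by_0\notin V$ (resp. $\by_0\notin V'$), both sides are the empty sum and hence $\mathbf{0}$, matching the convention of the footnote to Definition \ref{def:de}.

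First I would record that $\bJ(\bx)\in\RR^{|E|}_{>0}$ and $\bJ'(\bx)\in\RR^{|E'|}_{>0}$ for every $\bx\in\RR^n_{>0}$ — immediate from $\bk,\bk'$ having positive entries and $\bx^{\by}>0$ — so that $(G,\bJ(\bx))$ and $(G',\bJ'(\bx))$ are genuine flux systems and the statements in $(b)$ and $(c)$ make sense. For $(a)\Rightarrow(b)$: fix $\bx\in\RR^n_{>0}$ and a vertex $\by_0\in V\cup V'$, multiply the defining equation of dynamical equivalence at $\by_0$ by the positive scalar $\bx^{\by_0}$, and apply the identity above on each side; this produces exactly the defining equation of flux equivalence for $(G,\bJ(\bx))$ and $(G',\bJ'(\bx))$ at $\by_0$. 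Since $\by_0$ and $\bx$ were arbitrary, $(b)$ follows. The implication $(b)\Rightarrow(c)$ is immediate because $\RR^n_{>0}\neq\emptyset$.

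For $(c)\Rightarrow(a)$: assume $(G,\bJ(\bx))\sim(G',\bJ'(\bx))$ for some fixed $\bx\in\RR^n_{>0}$. For each vertex $\by_0\in V\cup V'$, the flux-equivalence equation at $\by_0$ reads $\bx^{\by_0}\sum_{\by_0\to\by\in E} k_{\by_0\to\by}(\by-\by_0) = \bx^{\by_0}\sum_{\by_0\to\by'\in E'} k'_{\by_0\to\by'}(\by'-\by_0)$ by the same identity; dividing by $\bx^{\by_0}>0$ recovers the dynamical-equivalence equation at $\by_0$, and therefore $(G,\bk)\sim(G',\bk')$.

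I do not expect a genuine obstacle: the mathematical content is just the bookkeeping identity relating the two families of balance equations, and the only points needing care are (i) consistently invoking the empty-sum convention when a vertex lies in only one of $V$, $V'$, and (ii) observing that, because dynamical equivalence is already defined vertex-by-vertex, no appeal to linear independence of the monomials $\{\bx^{\by}\}_{\by\in V}$ is required — for these particular flux vectors the passage between dynamical and flux equivalence is literally multiplication or division of each vertex equation by the positive number $\bx^{\by_0}$.
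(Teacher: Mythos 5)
Your proof is correct: because the paper defines both dynamical equivalence (Definition~\ref{def:de}) and flux equivalence vertex-by-vertex, the equivalence of (a), (b), and (c) reduces exactly to multiplying or dividing each vertex equation by the positive scalar $\bx^{\by_0}$, which is precisely your argument, and your points about the empty-sum convention and about not needing linear independence of the monomials $\{\bx^{\by}\}$ are both right. Note that the paper states this proposition as a citation to \cite{craciun2020efficient} and gives no proof of its own, so there is nothing to compare against in the text; your argument is the immediate one that these vertex-wise definitions afford.
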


\section{Disguised Toric Locus and \texorpdfstring{$\RR$}{R}-disguised Toric Locus} 
\label{sec:disguised_locus}

In this section, we introduce the key concepts of this paper: the disguised toric locus $\pK(G)$ and the $\RR$-disguised toric locus $\dK(G)$.
Additionally, we outline the method for determining the dimensions of $\dK(G)$ and $\pK(G)$.

% In this section, we give a way to compute the exact dimension of $\dK(G, G_1)$, where $G_1 \sqsubseteq G_c$. Further, we show the dimension of $\pK(G, G_1)$ when $\pK(G, G_1) \neq \emptyset$.

\begin{definition}[\cite{disg_2}]
\label{def:mas_realizable}
Let $G=(V, E)$ be an E-graph. A dynamical system 
\[
\frac{\mathrm{d} \bx}{\mathrm{d} t} 
= \bf (\bx),
\]
is said to be \defi{$\RR$-realizable}
%(or has a \defi{$\RR$-realization}) 
on $G$, if there exists some $\bk \in \mathbb{R}^{|E|}$ such that
\begin{equation} \label{eq:realization}
\bf (\bx) =
\sum_{\by \rightarrow \by' \in E}k_{\by_i \rightarrow \by_j} \bx^{\by_i}(\by_j - \by_i).
\end{equation}
In addition, the dynamical system is said to be \defi{realizable} 
%(or has a \defi{realization}) 
on $G$ if $\bk \in \mathbb{R}^{|E|}_{>0}$ in \eqref{eq:realization}.
\end{definition}

\begin{definition}[\cite{CraciunDickensteinShiuSturmfels2009}]

Let $G=(V, E)$ be an E-graph. The \defi{toric locus} of $G$ is defined as
\begin{equation} \notag
\mK (G) := \{ \bk \in \mathbb{R}_{>0}^{|E|} \ \big| \ (G, \bk) \ \text{is a toric dynamical system} \}.
\end{equation}
A dynamical system is said to be \defi{disguised toric} (or has a \defi{toric realization}) on $G$ if it is realizable on $G$ for some $\bk \in \mK (G)$. 
\end{definition}

We are now prepared to define the disguised toric locus and the $\RR$-disguised toric locus.

\begin{definition}
\label{def:de_realizable}

Let $G = (V, E)$ and $G' = (V', E')$ be two E-graphs.
The set $\dK(G, G')$ is defined as 
\begin{equation} \notag
\dK(G, G') := \{ \bk \in \mathbb{R}^{|E|} \ \big| \ \text{the dynamical system } (G, \bk)\footnote{\label{footnote2} Note that when $\bk \not\in \mathbb{R}^{|E|}_{>0}$, the dynamical system $(G, \bk)$ follows \eqref{def:mas_ds}.} \ \text{is disguised toric on } G' \}.
\end{equation} 
The \defi{$\RR$-disguised toric locus} of $G$ is given by
\begin{equation} \notag
\dK(G) := \displaystyle\bigcup_{G' \sqsubseteq G_{c}} \ \dK(G, G').
\end{equation}
Define $\pK (G, G') := \dK(G, G') \cap \mathbb{R}^{|E|}_{>0}$, the \defi{disguised toric locus} of $G$ is given by
\begin{equation} \notag
\pK (G) := \displaystyle\bigcup_{G' \sqsubseteq G_{c}} \ \pK(G, G').
\end{equation}
\end{definition}

The set $\dK(G, G')$ consists of all rate vectors $\bk$ for which the system $(G, \bk)$ can be realized in $G'$ with reaction rate constants belonging to the toric locus of $G'$.
The $\RR$-disguised toric locus $\dK(G)$ is defined as the union of all sets $\dK(G, G')$, where $G'$ is a weakly reversible subgraph of $G_{c}$.
Similarly, the set $\pK(G, G')$ and the \textbf{disguised toric locus} $\pK(G)$ are defined analogously to $\dK(G, G')$ and $\dK(G)$, but with the additional requirement that the rate vectors satisfy $\bk \in \mathbb{R}^{|E|}_{>0}$.

\begin{remark} 

The restriction to graphs $G'$ satisfying $G' \sqsubseteq G_{c}$ when defining the disguised toric locus and the $\RR$-disguised toric locus is well-justified.
In principle, $\dK (G)$ is intended to include all vectors $\bk \in \mathbb{R}^{|E|}$ such that the system $(G,\bk)$ has a toric realization with respect to any (arbitrary) graph $\tilde{G}$.
However, as shown in~\cite{craciun2020efficient}, if a dynamical system generated by $G$ is disguised toric on some graph $\tilde{G}$, then there exists a graph $G' \sqsubseteq G_{c}$ that can also give rise to a toric realization of the same dynamical system. 
Consequently, restricting $G'$ to weakly reversible subgraphs of $G_c$ does not alter the definition of $\dK(G)$.
\end{remark}

To compute the dimension of the disguised toric locus and the $\RR$-disguised toric locus, we start by defining the following set.

\begin{definition}
\label{def:flux_realizable}

Let $(G', \bJ')$ be a flux system. It is said to be \defi{$\RR$-realizable} on $G$ if there exists some $\bJ \in \mathbb{R}^{|E|}$, such that for every vertex\footref{footnote1} $\by_0 \in V \cup V'$,
\begin{equation} \notag
\sum_{\by_0 \to \by \in E} J_{\by_0 \to \by} 
(\by - \by_0) 
= \sum_{\by_0 \to \by' \in E'} J'_{\by_0 \to \by'} 
(\by' - \by_0).
\end{equation}
The set $\mJ (G', G)$ is defined as
\begin{equation} \notag
\mJ (G', G) := \{ \bJ' \in \mathcal{J} (G') \ \big| \ \text{the flux system } (G', \bJ') \text{ is $\RR$-realizable on } G \}.
\end{equation}
\end{definition} 

\begin{example}

Recall the E-graphs $G = (V, E)$ and $G' = (V', E')$ in Figure \ref{fig:running_example}. For the edges in $G'$ that also appear in $G$, their corresponding flux vectors are certainly $\RR$-realizable on $G$.  
Moreover, $G'$ contains four additional diagonal edges $\{ \by_1 \rightleftharpoons \by_3, \ \by_2 \rightleftharpoons \by_4 \}$ that are not in $G$.
By direct computation, each of these flux vectors is $\RR$-realizable on $G$. For example, 
\[
J_{\by_1 \to \by_3} (\by_3 - \by_1) = J_{\by_1 \to \by_3} (\by_2 - \by_1) + J_{\by_1 \to \by_3} (\by_4 - \by_1).
\]
Therefore, we conclude that $\mJ (G', G) = \mathcal{J} (G')$.
\qed
\end{example}

% Recall from Proposition \ref{prop:craciun2020efficient}, it highlights the relationship between dynamical equivalence and flux equivalence.
In \cite{disg_3}, we explored the connection between $\RR$-realizable rate vectors $\dK(G, G')$ and $\RR$-realizable flux vectors $\mJ(G', G)$.
Specifically, we  proved the existence of a homomorphism $\varphi$ that maps between two product spaces as follows:
\[
\varphi: \dK(G, G') \times \mD(G) \to \mJ(G', G) \times \mS_{G'} \times \eJ(G').
\]
This result enables us to derive the following theorem.

\begin{theorem}[\cite{disg_3}]
\label{thm:dim_kisg}

Let $G = (V, E)$ be an E-graph and let $G' = (V', E')$ be a weakly reversible E-graph with its stoichiometric subspace $\mS_{G'}$.

\begin{enumerate}[label=(\alph*)]
\item\label{part_a} Consider $\dK (G, G')$ from Definition~\ref{def:de_realizable}, then
\begin{equation} \notag
\begin{split} 
& \dim(\dK(G, G')) 
= \dim (\mJ(G', G)) + \dim (\mS_{G'})  + \dim(\eJ(G')) - \dim(\mD(G)),
\end{split}
\end{equation}
where $\mJ (G',G)$, $\mD(G)$, and $\eJ(G')$ are defined in Definitions~\ref{def:flux_realizable}, \ref{def:d0}, and \ref{def:j0}, respectively.

\item\label{part_b} Consider $\pK (G, G')$ from Definition~\ref{def:de_realizable} and assume that $\pK (G, G') \neq \emptyset$. Then
\begin{equation} \notag
\dim(\pK (G, G')) = \dim(\dK(G, G')).
\end{equation}
\end{enumerate}
\end{theorem}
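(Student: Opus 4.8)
## Proof Proposal for Theorem~\ref{thm:dim_kisg}

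The plan is to exploit the homomorphism $\varphi\colon \dK(G,G') \times \mD(G) \to \mJ(G',G) \times \mS_{G'} \times \eJ(G')$ mentioned just before the theorem statement, and to reduce the dimension count to a kernel–image computation. First I would make precise the map $\varphi$: given a rate vector $\bk \in \dK(G,G')$, by definition $(G,\bk)$ is disguised toric on $G'$, so there is a toric rate vector $\bk' \in \mK(G')$ with $(G,\bk)\sim(G',\bk')$; by Proposition~\ref{prop:craciun2020efficient}, at any fixed reference point $\bx_0 \in \RR^n_{>0}$ the induced flux vectors $\bJ(\bx_0)$ on $G$ and $\bJ'(\bx_0)$ on $G'$ are flux equivalent, and $\bJ'(\bx_0) \in \mJ(G',G)$ since $\bk' \in \mK(G')$ forces $\bJ'(\bx_0)$ complex-balanced and flux equivalence with a flux on $G$ gives $\RR$-realizability on $G$. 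The $\mD(G)$ factor in the domain accounts for the non-uniqueness of $\bk$ up to dynamical equivalence on $G$ (Lemma~\ref{lem:d0}), and the $\mS_{G'} \times \eJ(G')$ factors in the codomain encode, respectively, the choice of the reference state $\bx_0$ (equivalently, the steady-state variety of the complex-balanced system, which is a coset of $\mS_{G'}^\perp$ in log-coordinates, hence $\dim \mS_{G'}$ parameters) and the freedom in $\bk'$ among complex-balanced vectors generating the same dynamics (Lemma~\ref{lem:j0}(b)).

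For part~\ref{part_a}, the main step is to show $\varphi$ is a linear isomorphism, or at least that it is injective with image of full dimension equal to $\dim(\mJ(G',G)) + \dim(\mS_{G'}) + \dim(\eJ(G'))$; then comparing dimensions of domain and image gives
\begin{equation} \notag
\dim(\dK(G,G')) + \dim(\mD(G)) = \dim(\mJ(G',G)) + \dim(\mS_{G'}) + \dim(\eJ(G')),
\end{equation}
which rearranges to the claimed formula. Surjectivity onto the product would be argued by reconstructing a rate vector from a target triple: given $\bJ' \in \mJ(G',G)$, a point in $\mS_{G'}$ (determining $\bx_0$, hence $\bk'$ via $k'_{\by\to\by'} = J'_{\by\to\by'}/\bx_0^{\by}$), and an element of $\eJ(G')$ (shifting $\bk'$ within its dynamical-equivalence class while staying complex-balanced), one pulls back along the flux equivalence to a rate vector on $G$. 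Injectivity would follow from Lemmas~\ref{lem:d0} and~\ref{lem:j0}: two preimages differing in the $\dK(G,G')$ coordinate but mapping to the same flux on $G$ must differ by an element of $\mD(G)$, which is exactly the second domain coordinate. Since this is cited from~\cite{disg_3}, I would either invoke it directly or reprove it cleanly; the genuinely delicate point — and the main obstacle — is verifying that $\varphi$ is well-defined and that its image is \emph{all} of the product space rather than a proper subvariety, i.e. that every complex-balanced flux on $G'$ that is $\RR$-realizable on $G$ actually arises from a \emph{positive} toric rate vector paired with a \emph{genuine} disguised-toric $\bk$; this requires knowing that the toric locus $\mK(G')$ is parametrized (via the Birch-type / Matrix–Tree theorem) by $\mS_{G'}^\perp$-many free directions together with the complex-balanced flux cone, so that the product decomposition is exact and no dimension is lost.

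For part~\ref{part_b}, the point is that $\pK(G,G') = \dK(G,G') \cap \RR^{|E|}_{>0}$ is, when nonempty, an open subset of $\dK(G,G')$ in its subspace (or relative) topology: $\dK(G,G')$ is an affine/linear set (indeed the first projection of the linear domain of $\varphi$), and intersecting it with the open positive orthant either yields the empty set or a nonempty relatively open subset, which therefore has the same dimension as $\dK(G,G')$ itself. I would state this via the elementary fact that a nonempty open subset of an affine space (or of a linear subspace, or more generally of an irreducible variety) has full dimension; the only thing to check is that $\dK(G,G')$ has the requisite structure — linearity or at least irreducibility — which again comes from its description as a linear image under $\varphi$. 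I expect part~\ref{part_b} to be routine once part~\ref{part_a} is in hand; essentially all the work is in establishing the isomorphism $\varphi$ and pinning down the dimensions of the three factors on the right-hand side.
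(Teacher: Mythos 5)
You are trying to prove a theorem the paper itself does not prove (it is imported from \cite{disg_3}; the text only hints at the map $\varphi$), so the question is whether your plan would close, and there are two genuine gaps. The first is structural. You propose to treat $\varphi$ as a linear isomorphism (or an injection with full-dimensional image), and later call $\dK(G,G')$ ``an affine/linear set''. It is neither: $\dK(G,G')$ is only semialgebraic (Remark~\ref{rmk:semi_algebaic}), the correspondence $\bk'\mapsto \bJ'$ with $J'_{\by\to\by'}=k'_{\by\to\by'}\bx_0^{\by}$ is nonlinear in $\bx_0$, and, crucially, $\dK(G,G')$ is itself invariant under translation by $\mD(G)$ (Lemma~\ref{lem:d0}, extended to real rate vectors). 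Hence any map that, as in your construction, depends on $\bk$ only through the dynamics it generates is constant along $\mD(G)$-cosets inside the first factor, so injectivity of $\dK(G,G')\times\mD(G)\to\mJ(G',G)\times\mS_{G'}\times\eJ(G')$ cannot be obtained the way you indicate. In fact your own bookkeeping --- $\mD(G)$ is the fiber of $\bk$ over a fixed dynamics, $\eJ(G')$ is the fiber of the complex-balanced realization $\bk'$ over that same dynamics (Lemma~\ref{lem:j0}(b)), and $\mS_{G'}$ is the freedom in $\bx_0$ transverse to the steady-state set --- gives $\dim(\dK(G,G'))-\dim(\mD(G))=\dim(\mJ(G',G))+\dim(\mS_{G'})-\dim(\eJ(G'))$, i.e.\ the two correction terms enter with roles opposite to the display you are trying to prove, and this is exactly the version of the formula applied in Section~\ref{sec:applications}. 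A sanity check shows only that version can be correct in general: for $G=G'$ weakly reversible, $\mK(G')\subseteq\dK(G',G')$ and $\dim(\mK(G'))=\dim(\mathcal{J}(G'))+\dim(\mS_{G'})$ (the toric locus has codimension equal to the deficiency $|V'|-\ell-\dim(\mS_{G'})$, while $\dim(\mathcal{J}(G'))=|E'|-|V'|+\ell$), so whenever $\dim(\eJ(G'))<\dim(\mD(G'))$ --- e.g.\ the graph $G'$ of Figure~\ref{fig:running_example}, where the paper computes $3$ versus $4$ --- the displayed right-hand side would be strictly smaller than $\dim(\mK(G'))$, contradicting the inclusion. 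So the exact product bijection you aim for does not exist as you set it up; the count has to be organized as a fibration/quotient argument (or a bijection with $\eJ(G')$ on the domain side), carried out with semialgebraic or manifold dimension theory rather than rank--nullity.

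For part~(b), your argument is that $\pK(G,G')=\dK(G,G')\cap\RR^{|E|}_{>0}$ is relatively open and that a nonempty open subset of an affine or irreducible set has full dimension; you justify the needed structure by describing $\dK(G,G')$ as ``the first projection of the linear domain of $\varphi$'', which is circular (the first factor of that domain is the very set in question) and false, since $\dK(G,G')$ is in general neither linear nor known to be irreducible. A semialgebraic set can have strata of different dimensions, and a nonempty relatively open subset may lie entirely in a lower-dimensional stratum, so openness alone does not yield $\dim(\pK(G,G'))=\dim(\dK(G,G'))$. What is missing is an argument that a point of maximal local dimension of $\dK(G,G')$ can be found in, or moved into, the positive orthant --- for instance by exploiting that $\dK(G,G')$ is a cone and is stable under $\mD(G)$-translations, or by transporting the local parametrization from part~(a) to a neighborhood of a point of $\pK(G,G')$. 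As written, both halves of the proposal rest on structural claims (linearity of $\varphi$, affineness or irreducibility of $\dK(G,G')$) that are not available, and part~(a) additionally targets a product decomposition that your own fiber analysis contradicts.
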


\begin{remark}[\cite{disg_2}]
\label{rmk:semi_algebaic}

Given two E-graphs $G = (V, E)$ and $G' = (V', E')$, 
%\cite{disg_2} shows that 
both $\dK(G, G')$ and $\pK(G, G')$ are semialgebraic sets.
On a dense open subset of $\dK(G, G')$ or $\pK(G, G')$, these sets are locally submanifolds. The dimension of $\dK(G, G')$ or $\pK(G, G')$ is defined as the largest dimension at points where the sets are submanifolds.
\end{remark}

As a consequence of Theorem \ref{thm:dim_kisg} and Definition \ref{def:de_realizable}, the dimensions of $\dK(G)$ and $\pK(G)$ are given as follows. 

\begin{theorem}[\cite{disg_3}]
\label{thm:dim_kisg_main}

Let $G = (V, E)$ be an E-graph.

\begin{enumerate}[label=(\alph*)]
\item Consider $\dK(G)$ from Definition~\ref{def:de_realizable}. Then
\begin{equation} \notag
\dim (\dK(G) )
= \max_{G'\sqsubseteq G_c} 
\Big\{ \dim (\mJ(G',G)) + \dim (\mS_{G'})  + \dim(\eJ(G')) - \dim(\mD(G)) 
\Big\},
\end{equation}
where $\mJ (G',G)$, $\mD(G)$, and $\eJ(G')$ are defined in Definitions \ref{def:flux_realizable}, \ref{def:d0}, and \ref{def:j0}, respectively.

\item Further, consider $\pK (G)$ from Definition~\ref{def:de_realizable}. Assume that $\pK (G) \neq \emptyset$, then
\begin{equation} \notag
\begin{split}
& \dim (\pK(G) )
\\& = \max_{ \substack{ G'\sqsubseteq G_c, \\  \pK(G, G') \neq \emptyset } } 
\Big\{ \dim (\mJ(G',G)) + \dim (\mS_{G'})  + \dim(\eJ(G')) - \dim(\mD(G)) 
\Big\}.
\end{split}
\end{equation}
\end{enumerate}
\end{theorem}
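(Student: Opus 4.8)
The plan is to reduce the statement to Theorem~\ref{thm:dim_kisg} together with one elementary fact about semialgebraic sets: the dimension of a \emph{finite} union of semialgebraic sets equals the maximum of the dimensions of the pieces. First I would note that since $G_c$ is a finite directed graph, it has only finitely many subgraphs, hence only finitely many weakly reversible subgraphs $G' \sqsubseteq G_c$. Therefore, by Definition~\ref{def:de_realizable}, $\dK(G) = \bigcup_{G' \sqsubseteq G_c} \dK(G, G')$ and $\pK(G) = \bigcup_{G' \sqsubseteq G_c} \pK(G, G')$ are finite unions of subsets of the common ambient space $\RR^{|E|}$. By Remark~\ref{rmk:semi_algebaic}, each $\dK(G, G')$ and each $\pK(G, G')$ is semialgebraic, and the notion of dimension used here (the largest dimension of a point at which the set is locally a submanifold) agrees with the usual semialgebraic dimension. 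Invoking the "max over a finite union" property then gives
\begin{equation} \notag
\dim(\dK(G)) = \max_{G' \sqsubseteq G_c} \dim(\dK(G, G'))
\qquad\text{and}\qquad
\dim(\pK(G)) = \max_{G' \sqsubseteq G_c} \dim(\pK(G, G')).
\end{equation}

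For part (a), I would substitute the formula for $\dim(\dK(G, G'))$ supplied by Theorem~\ref{thm:dim_kisg}\ref{part_a} into the first identity above; this is verbatim the claimed expression. For part (b), observe that any $G'$ with $\pK(G, G') = \emptyset$ contributes nothing to the union $\pK(G)$, so the maximum may be restricted to those $G'$ with $\pK(G, G') \neq \emptyset$; the hypothesis $\pK(G) \neq \emptyset$ ensures this restricted index set is nonempty, so the maximum is well defined. For each such $G'$, Theorem~\ref{thm:dim_kisg}\ref{part_b} yields $\dim(\pK(G, G')) = \dim(\dK(G, G'))$, and Theorem~\ref{thm:dim_kisg}\ref{part_a} rewrites the right-hand side as the stated sum $\dim(\mJ(G', G)) + \dim(\mS_{G'}) + \dim(\eJ(G')) - \dim(\mD(G))$. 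Combining with the displayed identity for $\dim(\pK(G))$ completes the proof.

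The only step requiring genuine care is the passage from the union to the maximum of the dimensions of its pieces: one must confirm that the manifold-based dimension of Remark~\ref{rmk:semi_algebaic} coincides with semialgebraic dimension (so that the union property applies) and that forming the union cannot create a stratum of strictly larger dimension than any single piece — which it cannot, since any smooth point of top dimension of the union lies in one of the finitely many $\dK(G,G')$ (resp.\ $\pK(G,G')$) and locally coincides with it. This is the main, though essentially routine, obstacle; the remainder is bookkeeping built directly on Theorem~\ref{thm:dim_kisg} and Definition~\ref{def:de_realizable}.
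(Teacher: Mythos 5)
Your proposal is correct and follows essentially the same route as the paper, which presents Theorem~\ref{thm:dim_kisg_main} as a direct consequence of Theorem~\ref{thm:dim_kisg} and the finite-union structure in Definition~\ref{def:de_realizable}; your only addition is to spell out the (routine) fact that the dimension of a finite union of semialgebraic sets is the maximum of the dimensions of the pieces, consistent with the dimension convention of Remark~\ref{rmk:semi_algebaic}.
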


\section{Computing the Dimension of the Disguised Toric Locus}
\label{sec:main}

Theorem \ref{thm:dim_kisg_main} provides a formula for $\dim (\pK(G))$. 
Definition \ref{def:mas_ds}, along with Remarks \ref{rmk:d0} and \ref{rmk:j0}, offers a framework for computing $\dim (\mS_{G'})$, $\dim(\mD(G))$, and $\dim(\eJ(G'))$, respectively.
However, the computation of $\dim (\mJ(G', G))$ in this formula remains unresolved, preventing the full determination of the disguised toric locus dimension.

In this section, we address this gap by providing a method for computing $\dim (\mJ(G', G))$, which is the main result of this paper. We begin by defining two sets as follows.

\begin{definition}
\label{def:tilde_j_G}

Let $G = (V, E)$ and $G' = (V', E')$ be two E-graphs.

\begin{enumerate}[label=(\alph*)]
\item The set $\tJ (G)$ is defined as
\begin{equation} \notag
\tJ (G) := \{ \bJ \in \RR^{|E|} \ \big| \ \sum_{\by \to \by_0 \in E} {J}_{\by \to \by_0} 
= \sum_{\by_0 \to \by' \in E} {J}_{\by_0 \to \by'} \ \text{for every vertex } \by_0 \in V \}.
\end{equation}

\item  The set $\tmJ (G', G)$ is defined as
\[
\tmJ (G', G) := \{ \bJ' \in \RR^{|E'|} \ \big| \ \text{the flux system } (G', \bJ')\footnote{\label{footnote3} Note that when $\bJ \not\in \RR^{|E'|}_{>0}$, the flux system $(G', \bJ')$ follows \eqref{eq:f_s}.} \ \text{is $\RR$-realizable on } G \}.
\]
\end{enumerate}
\end{definition}

\begin{lemma}
\label{lem:jr_g'_g}

Let $G = (V, E)$ and $G' = (V', E')$ be two E-graphs.

\begin{enumerate}[label=(\alph*)]
\item Two sets $\tJ (G')$ and $\tmJ (G', G)$ are both linear subspaces of $\RR^{|E'|}$.

\item Consider $\mJ (G', G)$ from Definition \ref{def:flux_realizable}. Assume that $\mJ (G', G) \neq \emptyset$, then
\begin{equation} \label{eq:jr_g'_g}
\dim (\mJ (G', G)) = \dim \big( \tJ (G') \cap \tmJ (G', G) \big).
\end{equation}
\end{enumerate}
\end{lemma}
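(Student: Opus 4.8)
The plan is to prove part (a) first, then use it together with the definitions to establish the dimension formula in part (b).

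For part (a): The set $\tJ(G')$ is cut out by the linear equations $\sum_{\by \to \by_0 \in E'} J_{\by \to \by_0} = \sum_{\by_0 \to \by' \in E'} J_{\by_0 \to \by'}$ ranging over $\by_0 \in V'$, so it is manifestly a linear subspace of $\RR^{|E'|}$ (it is the kernel of the incidence-type matrix analogous to $B$ in Remark~\ref{rmk:j0}, but without the positivity restriction and without requiring membership in $\mD$). For $\tmJ(G', G)$: by Definition~\ref{def:tilde_j_G}(b), $\bJ' \in \tmJ(G', G)$ means there exists $\bJ \in \RR^{|E|}$ realizing, vertex by vertex, the flux vector field of $(G', \bJ')$. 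The realizability condition $\sum_{\by_0 \to \by \in E} J_{\by_0 \to \by}(\by - \by_0) = \sum_{\by_0 \to \by' \in E'} J'_{\by_0 \to \by'}(\by' - \by_0)$ for each $\by_0 \in V \cup V'$ is a system of \emph{linear} equations in the combined unknowns $(\bJ, \bJ')$; hence the set of pairs satisfying it is a linear subspace of $\RR^{|E|} \times \RR^{|E'|}$, and $\tmJ(G', G)$ is its image under the (linear) projection to $\RR^{|E'|}$, so it too is a linear subspace.

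For part (b): Start from the decomposition $\mJ(G', G) = \mathcal{J}(G') \cap \tmJ(G', G)$ recorded in the introduction, where $\mathcal{J}(G')$ is the set of complex-balanced flux vectors of $G'$, i.e.\ the \emph{positive} flux vectors lying in the linear space $\tJ(G')$; concretely $\mathcal{J}(G') = \tJ(G') \cap \RR^{|E'|}_{>0}$. Thus $\mJ(G', G) = \tJ(G') \cap \tmJ(G', G) \cap \RR^{|E'|}_{>0}$, which is the intersection of the linear subspace $L := \tJ(G') \cap \tmJ(G', G)$ with the open positive orthant. The key point is that whenever $L \cap \RR^{|E'|}_{>0} \neq \emptyset$, this intersection is a nonempty relatively open subset of $L$, hence a submanifold of the same dimension as $L$; therefore $\dim(\mJ(G', G)) = \dim(L) = \dim(\tJ(G') \cap \tmJ(G', G))$, which is \eqref{eq:jr_g'_g}. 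I would justify the nonemptiness-implies-full-dimension step by noting that $\RR^{|E'|}_{>0}$ is open, so its intersection with the affine/linear set $L$ is open in $L$, and a nonempty open subset of a linear subspace has the dimension of that subspace.

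The main obstacle is the bookkeeping needed to pin down that $\mathcal{J}(G')$ really equals $\tJ(G') \cap \RR^{|E'|}_{>0}$ and, correspondingly, that $\mJ(G', G)$ equals $L \cap \RR^{|E'|}_{>0}$ with the \emph{same} linear space $L = \tJ(G') \cap \tmJ(G',G)$ appearing in \eqref{eq:jr_g'_g}: one must check that the $\RR$-realizability condition defining membership in $\mJ(G',G)$ (Definition~\ref{def:flux_realizable}, stated for $\bJ' \in \mathcal{J}(G') \subseteq \RR^{|E'|}_{>0}$) is literally the same linear condition on $\bJ'$ as the one defining $\tmJ(G',G)$ (Definition~\ref{def:tilde_j_G}(b), stated for all $\bJ' \in \RR^{|E'|}$) — they differ only in the ambient positivity assumption, not in the realizability equations, so this identification is immediate once unwound. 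Everything else is the standard fact that a nonempty open subset of a linear subspace is a submanifold of full dimension, combined with the convention in Remark~\ref{rmk:semi_algebaic} for how dimension of these semialgebraic sets is measured.
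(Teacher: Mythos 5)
Your proposal is correct and follows essentially the same route as the paper: part (a) by linearity of the defining/realizability equations (the paper verifies closure under linear combinations by constructing the realizing flux $\alpha_1\bJ_1+\alpha_2\bJ_2$, which is equivalent to your projection argument), and part (b) by the same decomposition $\mJ(G',G) = \big(\tJ(G')\cap\tmJ(G',G)\big)\cap\RR^{|E'|}_{>0}$ together with the observation that a nonempty intersection of a linear subspace with the open positive orthant (the paper calls it an open cone) has the full dimension of that subspace. No gaps.
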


\begin{proof}

(a)
By direct computation, Definition \ref{def:tilde_j_G} shows that the set $\tJ (G')$ forms a linear subspace of $\RR^{|E'|}$.
We now prove that $\tmJ (G', G)$ is a linear subspace of $\RR^{|E'|}$.

Suppose $\bJ'_1, \bJ'_2 \in \tmJ (G', G)$ and two real values $\alpha_1, \alpha_2 \in \mathbb{R}$.
Since both flux systems $(G', \bJ'_1)$ and $(G', \bJ'_2)$ are $\RR$-realizable on G, Definition \ref{def:flux_realizable} shows that there exist $\bJ_1, \bJ_2 \in \mathbb{R}^{|E|}$, such that for every vertex $\by_0 \in V \cup V'$,
\begin{equation} \label{eq1:jr_g'_g}
\begin{split}
\sum_{\by_0 \to \by \in E} J_{1, \by_0 \to \by} 
(\by - \by_0) 
& = \sum_{\by_0 \to \by' \in E'} J'_{1, \by_0 \to \by'} 
(\by' - \by_0), 
\\ \sum_{\by_0 \to \by \in E} J_{2, \by_0 \to \by} 
(\by - \by_0) 
& = \sum_{\by_0 \to \by' \in E'} J'_{2, \by_0 \to \by'} 
(\by' - \by_0).
\end{split}
\end{equation}
Consider $\bJ' = \alpha_1 \bJ'_1 + \alpha_2 \bJ'_2 \in \RR^{|E'|}$. From \eqref{eq1:jr_g'_g}, we obtain that
\begin{equation} \notag
\sum_{\by_0 \to \by \in E} (\alpha_1 J_{1, \by_0 \to \by} + \alpha_2 J_{2, \by_0 \to \by}) (\by - \by_0) 
= \sum_{\by_0 \to \by' \in E'} J'_{1, \by_0 \to \by'} 
(\by' - \by_0).
\end{equation}
Thus, $\bJ' \in \tmJ (G', G)$ and $\tmJ (G', G)$ is a linear subspace of $\RR^{|E'|}$.

\smallskip

(b) 
From Definition \ref{def:flux_realizable}, we have
\begin{equation} \label{eq2:jr_g'_g}
\mJ (G', G) = \big( \tJ (G') \cap \tmJ (G', G) \big) \cap \RR^{|E'|}_{>0}.
\end{equation}
Part (a) shows both $\tJ (G')$ and $\tmJ (G', G)$ are linear subspaces of $\RR^{|E'|}$. This implies that $\tJ (G') \cap \tmJ (G', G)$ is also a linear subspace of $\RR^{|E'|}$.
From \eqref{eq2:jr_g'_g} and the assumption that $\mJ (G', G) \neq \emptyset$, $\mJ (G', G)$ is an open cone and thus we conclude \eqref{eq:jr_g'_g}.
\end{proof}

\smallskip

\textbf{Notation:} 
We introduce the following notation, which will be used in the rest of this section and in Section \ref{sec:applications}.

(a) Assume the E-graph $G' = (V', E')$ has $|V'| = m$ vertices, $|E'| = r$ reactions, and consists of $\ell \geq 1$ linkage classes, denoted by $L_1 = (V_1, E_1), \ldots, L_{\ell} = (V_{\ell}, E_{\ell})$, such that
\[
V' = V_1 \sqcup V_2 \sqcup \cdots \sqcup V_{\ell}
\ \text{ and } \
E' = E_1 \sqcup E_2 \sqcup \cdots \sqcup E_{\ell},
\]
where $V_i = \{ \by_{i,1}, \ldots, \by_{i, m_i} \}$ for each $1 \leq i \leq \ell$.
Further, assume that $|E_i| = r_i$ for each $1 \leq i \leq \ell$, and that the reactions in each linkage class are indexed accordingly.

\smallskip

(b)
From Lemma \ref{lem:jr_g'_g}, $\tJ (G')$ and $\tmJ (G', G)$ are both linear subspaces of $\RR^{r}$.
Let their codimensions be denoted by
\[
\codim \big( \tJ (G') \big) = d_1,
\ \
\codim \big( \tmJ (G', G) \big) = d_2,
\]
and assume $\tJ (G')^{\perp}$ has a basis $\{ \balpha_1, \ldots, \balpha_{d_1} \}$, and $\tmJ (G', G)^{\perp}$ has a basis $\{ \bbeta_1, \ldots, \bbeta_{d_2} \}$. 
Consider two matrices $A, B$ defined as follows:
\begin{equation} \label{def1:codimen_sum}
A = 
\begin{pmatrix}
\balpha^{\intercal}_1 \\
\vdots \\
\balpha^{\intercal}_{d_1}
\end{pmatrix} \in \mathbb{R}^{d_1 \times r}
\ \text{ and } \
B = 
\begin{pmatrix}
\bbeta^{\intercal}_1 \\
\vdots \\
\bbeta^{\intercal}_{d_2}
\end{pmatrix} \in \mathbb{R}^{d_2 \times r}.
\end{equation}
Thus, two linear subspaces $\tJ (G')$ and $\tmJ (G', G)$ can be expressed as
\begin{equation} \label{def2:codimen_sum}
\begin{split}
\tJ (G') & = \{ \bJ' \in \RR^{r} \ \big| \ A \bJ' = \mathbf{0} \},
\\ \tmJ (G', G) & = \{ \bJ' \in \RR^{r} \ \big| \ B \bJ' = \mathbf{0} \}.
\end{split}
\end{equation}
\qed

\smallskip

We begin with an intermediate lemma, which will be used in the proof of the main Theorem~\ref{thm:main}.

\begin{lemma}
\label{lem:intermediate}

Let $G' = (V', E')$ be a weakly reversible E-graph and let $G = (V, E)$ be an E-graph.
Given a vector 
\[
\bJ' = (\bJ'_{\by_i \to \by_j})_{\by_i \to \by_j \in E'} \in \tJ (G') \cap \tmJ (G', G),
\]
let $\bw = (w_1, \ldots, w_m) \in \mathbb{R}^m$ be an arbitrary real vector. Consider the following expression:
\begin{equation} \label{eq2:codimen_sum}
\bJ'_{\bw} = (\bJ'_{\bw, \by_i \to \by_j})_{\by_i \to \by_j \in E'}
\ \text{ with } \
\bJ'_{\bw, \by_i \to \by_j} = \bJ'_{\by_i \to \by_j} w_i.
\end{equation}
Then, we have the following property:
\begin{enumerate}[label=(\alph*)]
\item $\bJ'_{\bw} \in \tmJ (G', G)$ for any $\bw = (w_1, \ldots, w_m) \in \mathbb{R}^m$.

\item $\bJ'_{\bw} \in \tJ (G')$ if and only if
$\bw \in \ker (\tilde{J})$, where $\tilde{J} \in \mathbb{R}^{m \times m}$ is the Kirchoff matrix defined as
\begin{equation} \label{def3:codimen_sum}
\tilde{J}_{ij} =
\begin{cases}
- \sum\limits_{\by_i \to \by \in E'} J'_{\by_i \to \by} & \text{ if $i = j$}, \\[5pt]
\quad J'_{\by_j \to \by_i} & \text{ if $i \neq j$ and $\by_j \to \by_i \in E'$}, \\[5pt]
\quad 0 & \text{ otherwise}.
\end{cases}
\end{equation}
\end{enumerate}
\end{lemma}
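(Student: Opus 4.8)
The plan is to analyze the two conditions defining membership in $\tmJ(G',G)$ and $\tJ(G')$ separately, exploiting the fact that the rescaling in \eqref{eq2:codimen_sum} multiplies the flux on every edge leaving $\by_i$ by the same scalar $w_i$.

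For part (a): The vector $\bJ'$ lies in $\tmJ(G',G)$, so by Definition~\ref{def:flux_realizable} there exists $\bJ \in \RR^{|E|}$ such that for every vertex $\by_0 \in V \cup V'$,
\[
\sum_{\by_0 \to \by \in E} J_{\by_0 \to \by}(\by - \by_0)
= \sum_{\by_0 \to \by' \in E'} J'_{\by_0 \to \by'}(\by' - \by_0).
\]
First I would observe that the $\RR$-realizability condition is stated vertex-by-vertex, and the source vertex of each edge $\by_i \to \by_j \in E'$ determines which scalar $w_i$ multiplies its flux. So I would define $\bJ_{\bw}$ on $G$ by scaling the flux of each edge $\by_0 \to \by \in E$ by the factor $w_i$ corresponding to the source vertex $\by_0 = \by_i$ (with $w$ extended by $0$, or by an arbitrary value, on vertices of $V \setminus V'$, noting that for $\by_0 \notin V'$ the right-hand side is an empty sum and the left-hand side can be taken to be $\mathbf{0}$ since $(G',\bJ')$ realizable on $G$ forces that linear combination to already vanish). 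Multiplying both sides of the realizability identity at vertex $\by_i$ by $w_i$ then gives exactly the realizability identity for $\bJ'_{\bw}$ witnessed by $\bJ_{\bw}$, so $\bJ'_{\bw} \in \tmJ(G',G)$.

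For part (b): By Definition~\ref{def:tilde_j_G}(a), $\bJ'_{\bw} \in \tJ(G')$ iff for every vertex $\by_0 \in V'$ the total incoming flux equals the total outgoing flux. I would compute, for each vertex $\by_j \in V'$, the outgoing flux of $\bJ'_{\bw}$, which is $w_j \sum_{\by_j \to \by \in E'} J'_{\by_j \to \by}$, and the incoming flux, which is $\sum_{\by_i \to \by_j \in E'} w_i J'_{\by_i \to \by_j}$. The condition ``incoming $=$ outgoing'' at $\by_j$ is precisely
\[
\sum_{\by_i \to \by_j \in E'} w_i J'_{\by_i \to \by_j} - w_j \sum_{\by_j \to \by \in E'} J'_{\by_j \to \by} = 0,
\]
which is exactly the $j$-th component of $\tilde{J}\bw = \mathbf{0}$ with $\tilde{J}$ as in \eqref{def3:codimen_sum} (the diagonal entry $-\sum_{\by_j \to \by \in E'} J'_{\by_j \to \by}$ multiplied by $w_j$, plus the off-diagonal entries $J'_{\by_i \to \by_j}$ multiplied by $w_i$). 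Collecting these equations over all $j$ gives $\bJ'_{\bw} \in \tJ(G') \iff \bw \in \ker(\tilde{J})$. Here I would use that $\bJ' \in \tJ(G')$ already (so the unscaled balance holds), which is what guarantees $\tilde J$ has the stated Kirchhoff structure and makes the bookkeeping consistent, though strictly the computation of part (b) does not need it.

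The main obstacle is the careful handling of vertices in the symmetric difference $V' \triangle V$ in part (a): one must check that extending $\bw$ to $V$ (and choosing the scaled witness $\bJ_{\bw}$) does not break the identities at vertices $\by_0 \in V \setminus V'$, where the right-hand side is an empty sum. The cleanest route is to note that since $(G',\bJ')$ is $\RR$-realizable on $G$ via $\bJ$, the combination $\sum_{\by_0 \to \by \in E} J_{\by_0 \to \by}(\by - \by_0)$ already equals $\mathbf{0}$ at such $\by_0$, so scaling it by any factor keeps it $\mathbf{0}$; the remaining vertices $\by_0 \in V' \setminus V$ have empty left-hand sums in $E$ and the right-hand side $\sum_{\by_0 \to \by' \in E'} J'_{\by_0 \to \by'}(\by'-\by_0)$ must already vanish, so scaling by $w_i$ preserves that too. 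With this observation the rest is the routine linear bookkeeping sketched above.
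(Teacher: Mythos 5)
Your proposal is correct and follows essentially the same route as the paper: part (a) by scaling the $\RR$-realizability witness $\bJ$ vertex-by-vertex with the factors $w_i$, and part (b) by observing that the in/out flux balance for $\bJ'_{\bw}$ at each vertex is exactly the corresponding row of $\tilde{J}\bw = \mathbf{0}$ (the paper phrases this via per-linkage-class incidence matrices $\tilde{A}^{(i)}$, but the computation is the same). Your explicit treatment of vertices in $V \setminus V'$ and $V' \setminus V$ is a welcome bit of extra care that the paper leaves implicit.
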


\begin{proof}

(a)
Since $\bJ' \in \tmJ (G', G)$, from Definition \ref{def:flux_realizable} there exists some $\bJ \in \mathbb{R}^{|E|}$, such that for every vertex $\by_i \in V \cup V'$,
\begin{equation} \notag
\sum_{\by_i \to \by \in E} J_{\by_i \to \by} 
(\by - \by_i) 
= \sum_{\by_i \to \by' \in E'} J'_{\by_i \to \by'} 
(\by' - \by_i).
\end{equation}
This, together with \eqref{eq2:codimen_sum}, implies that for every vertex $\by_i \in V \cup V'$,
\begin{equation} \notag
\sum_{\by_i \to \by \in E} J_{\by_i \to \by} w_i
(\by - \by_i) 
= \sum_{\by_i \to \by' \in E'} J'_{\by_i \to \by'} w_i
(\by' - \by_i)
= \sum_{\by_0 \to \by' \in E'} J'_{\bw, \by_i \to \by'} 
(\by' - \by_i).
\end{equation}
Thus, we prove part (a).
Recall the notation in \eqref{def1:codimen_sum}; this further indicates that
\begin{equation} \label{eq9:codimen_sum}
\bJ'_{\bw} \in \ker \left( B = 
\begin{pmatrix}
\bbeta^{\intercal}_1 \\
\vdots \\
\bbeta^{\intercal}_{d_2}
\end{pmatrix} \right)
\ \text{ for any } \
\bw = (w_1, \ldots, w_m) \in \mathbb{R}^m.
\end{equation}

\smallskip

(b)
We now identify the set of vectors $\bw \in \mathbb{R}^m$ such that $\bJ'_{\bw} \in \tJ (G')$.
From Definition \ref{def:tilde_j_G} and assumption, $\tJ (G')$ denotes the set of vectors $\bJ' = (\bJ'_{\by_i \to \by_j})_{\by_i \to \by_j \in E'}$ satisfying that for any $1 \leq i \leq \ell$,
\begin{equation} \label{eq10:codimen_sum}
\sum_{\by \to \by_0 \in E_i} J'_{\by \to \by_0} 
= \sum_{\by_0 \to \by' \in E_i} J'_{\by_0 \to \by'}
\ \text{ for every vertex $\by_0 \in V_i$}.
\end{equation}
For each $1 \leq i \leq \ell$, let $\bJ'_{\bw (i)} \in \mathbb{R}^{r_i}$ denote the components of $\bJ'_{\bw}$ corresponding to the reactions within the $i$-th linkage class $L_i$, and define the matrix $\tilde{A}^{(i)} \in \mathbb{R}^{m_i \times r_i}$ as follows:
\[
\tilde{A}^{(i)}_{jk} =
\begin{cases}
1 & \text{ if $\by_{i,j}$ is the source vertex in the $k$-th reaction}, \\
-1 & \text{ if $\by_{i,j}$ is the target vertex in the $k$-th reaction}, \\
0 & \text{ otherwise}.
\end{cases}
\]
From \eqref{eq10:codimen_sum}, $\bJ'_{\bw} \in \tJ (G')$ is equivalent to the following condition: 
\begin{equation} \label{eq11:codimen_sum}
\tilde{A}^{(i)} \bJ'_{\bw (i)} = \mathbf{0}
\ \text{ for every $1 \leq i \leq \ell$}.
\end{equation}
Recall the Kirchhoff matrix $\tilde{J} \in \mathbb{R}^{m \times m}$ defined in \eqref{def3:codimen_sum}. The condition \eqref{eq11:codimen_sum} can be rewritten as
\begin{equation} \notag
\tilde{J}
\begin{pmatrix}
w_1 \\
\vdots \\
w_{m}
\end{pmatrix} = \mathbf{0}.
\end{equation}
Therefore, $\bJ'_{\bw} \in \tJ (G')$ if and only if $\bw \in \ker (\tilde{J})$.
\end{proof}

From Lemma \ref{lem:jr_g'_g}, the dimension of $\mJ(G', G)$ is determined by the complex-balanced flux condition and the $\mathbb{R}$-realizable condition.
The following theorem establishes a key property: when computing $\dim (\mJ(G', G))$, these two conditions operate independently.

\begin{theorem}
\label{thm:main}

Let $G' = (V', E')$ be a weakly reversible E-graph and let $G = (V, E)$ be an E-graph. 
Assume that $\mJ (G', G) \neq \emptyset$, then
\begin{equation}
\label{eq:codimen_sum}
\codim \big( \tJ (G') \cap \tmJ (G', G) \big) = \codim \big( \tJ (G') \big) + \codim \big( \tmJ (G', G) \big).
\end{equation}
\end{theorem}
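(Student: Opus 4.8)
The identity to establish is that two linear subspaces of $\RR^r$ — namely $\tJ(G')$ (the complex-balanced side) and $\tmJ(G',G)$ (the $\RR$-realizability side) — intersect "transversally" in the sense that their codimensions add. Equivalently, using the matrix description in \eqref{def2:codimen_sum}, the stacked matrix $\binom{A}{B}$ has rank $d_1 + d_2$; that is, the row spaces $\tJ(G')^{\perp}$ and $\tmJ(G',G)^{\perp}$ intersect only in $\mathbf{0}$. So the plan is to show $\tJ(G')^{\perp} \cap \tmJ(G',G)^{\perp} = \{\mathbf{0}\}$, or dually that $\tJ(G') + \tmJ(G',G) = \RR^r$. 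I will pursue the "sum equals everything" formulation, since Lemma~\ref{lem:intermediate} gives a concrete mechanism for producing vectors in the intersection that can be pushed around.

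The key device is the scaling map $\bJ' \mapsto \bJ'_{\bw}$ from \eqref{eq2:codimen_sum}: given any $\bJ' \in \tJ(G') \cap \tmJ(G',G)$ (which exists and is in fact an interior point, since $\mJ(G',G) \neq \emptyset$ by hypothesis, so we may take $\bJ'$ with all positive entries), and any weight vector $\bw \in \RR^m$ on the vertices, the rescaled vector $\bJ'_{\bw}$ always stays in $\tmJ(G',G)$ by Lemma~\ref{lem:intermediate}(a), and it lies in $\tJ(G')$ exactly when $\bw \in \ker(\tilde J)$ for the Kirchhoff matrix $\tilde J$ of the complex-balanced flux $\bJ'$. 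The crucial classical input here is that for a weakly reversible $G'$ with $\ell$ linkage classes, a complex-balanced (hence positive, strongly connected on each linkage class) flux vector has Kirchhoff matrix $\tilde J$ of corank exactly $\ell$, with kernel spanned by the indicator-type vectors supported on individual linkage classes (the Matrix-Tree / Markov chain structure). So the set $\{\bJ'_{\bw} : \bw \in \ker(\tilde J)\}$ is an $\ell$-dimensional family living inside $\tJ(G') \cap \tmJ(G',G)$, parametrized by independently rescaling each linkage class's fluxes.

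From here the argument should go by a dimension count. Let $V := \tJ(G') \cap \tmJ(G',G)$. One wants $\dim \tJ(G') + \dim \tmJ(G',G) - \dim V = r$, i.e. $\dim(\tJ(G') + \tmJ(G',G)) = r$. I would show $\tJ(G') + \tmJ(G',G) = \RR^r$ directly: take an arbitrary perturbation direction; since $\tmJ(G',G)$ is cut out by $B\bJ' = \mathbf{0}$, it suffices to show that $\tJ(G')$ surjects onto $\RR^r / \tmJ(G',G) \cong \RR^{d_2}$ under the quotient, equivalently that $B|_{\tJ(G')}$ has rank $d_2$. A cleaner route: count using the linkage-class decomposition. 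On each linkage class $L_i$, the fluxes live in $\RR^{r_i}$, the "complex-balanced on $L_i$" condition $\tilde A^{(i)}\bJ'_{(i)} = \mathbf{0}$ has solution space of dimension $r_i - (m_i - 1)$ (since $\tilde A^{(i)}$, the incidence matrix of a strongly connected graph on $m_i$ nodes, has rank $m_i - 1$), so $\codim \tJ(G') = \sum_i (m_i - 1) = m - \ell$. The matching fact needed is that $\codim \tmJ(G',G)$ plus $m - \ell$ equals $r - \dim V$; one establishes this by exhibiting, for each linearly independent constraint in $B$, that it remains independent modulo $A$ — and the scaling construction is exactly what shows the "overlap" between the two constraint systems is accounted for by the $\ell$-dimensional Kirchhoff kernel, giving $\dim V = \dim(\text{solution space of } B\text{ restricted appropriately})$ with no further collapse.

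The main obstacle, and where I expect the real work to be, is proving that there is \emph{no} hidden linear dependence between the complex-balancing constraints ($A$) and the realizability constraints ($B$) beyond what is forced — i.e. the "transversality" itself, rather than either codimension in isolation. The scaling lemma supplies one direction (it shows the intersection is at least as big as a naive count would suggest, via the $\ell$ Kirchhoff directions), so the delicate part is the reverse inequality: showing $\dim V \le \dim\tJ(G') + \dim\tmJ(G',G) - r$, equivalently that $\tJ(G') + \tmJ(G',G)$ fills all of $\RR^r$. I anticipate handling this by picking a positive $\bJ' \in V$, using weak reversibility of $G'$ to parametrize $\tJ(G')$ near $\bJ'$ via cycle/linkage-class data, and showing that the realizability map (comparing $\sum_{\by_0 \to \by \in E'} J'_{\by_0\to\by'}(\by'-\by_0)$ against what $G$ can produce) imposes constraints genuinely complementary to the Kirchhoff kernel directions; the positivity of $\bJ'$ (guaranteed by $\mJ(G',G)\neq\emptyset$) is what makes the Kirchhoff matrix have the clean corank-$\ell$ structure and is therefore essential to the count.
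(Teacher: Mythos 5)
Your setup is sound and matches the paper's framing: the identity is equivalent to $\tJ(G')^{\perp} \cap \tmJ(G',G)^{\perp} = \{\mathbf{0}\}$, i.e.\ $\tJ(G') + \tmJ(G',G) = \RR^{r}$, and you have the right tools on the table --- the scaling construction $\bJ' \mapsto \bJ'_{\bw}$ of Lemma~\ref{lem:intermediate}, the count $\codim\big(\tJ(G')\big) = |V'| - \ell$ coming from the incidence matrices $\tilde{A}^{(i)}$, and the fact that the Kirchhoff matrix of a \emph{positive} complex-balanced flux vector on a weakly reversible graph with $\ell$ linkage classes has corank exactly $\ell$ (positivity being supplied by $\mJ(G',G)\neq\emptyset$). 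But the proof is not actually completed. The step you yourself flag as ``where I expect the real work to be'' --- that the realizability constraints are ``genuinely complementary'' to the complex-balancing constraints --- is precisely the assertion of the theorem, and your treatment of it (``showing that the realizability map imposes constraints genuinely complementary to the Kirchhoff kernel directions'') is a restatement of the goal, not an argument. You also misattribute the role of the scaling lemma: the bound $\dim\big(\tJ(G')\cap\tmJ(G',G)\big) \geq \dim\tJ(G') + \dim\tmJ(G',G) - r$ is automatic linear algebra and needs no construction at all; the scaling family is needed exactly for the hard direction (the upper bound on the intersection), which your proposal leaves open.

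For comparison, here is how the paper closes that gap, and what is missing from your sketch. Choose as a basis of $\tJ(G')^{\perp}$ the first $m_i - 1$ rows of each $\tilde{A}^{(i)}$, and suppose for contradiction that this basis together with a basis $\{\bbeta_1,\dots,\bbeta_{d_2}\}$ of $\tmJ(G',G)^{\perp}$ is linearly dependent, say $\balpha_{d_1}$ depends on the remaining vectors. Evaluate all of these functionals on the family $\bJ'_{\bw}$ built from a fixed positive $\bJ' \in \mJ(G',G)$: every $\bbeta_j$ annihilates $\bJ'_{\bw}$ by Lemma~\ref{lem:intermediate}(a), and each $\balpha_i$ applied to $\bJ'_{\bw}$ is (up to sign) the corresponding row of the Kirchhoff matrix $\tilde{J}$ applied to $\bw$. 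Hence the hypothetical dependence would make the condition $\bJ'_{\bw} \in \tJ(G')$ equivalent to $\bw \in \ker(\tilde{J}_{(d_1-1)})$, a space of dimension at least $m - (d_1 - 1) = \ell + 1$, contradicting Lemma~\ref{lem:intermediate}(b) together with $\dim\ker(\tilde{J}) = \ell$. This explicit translation of a supposed linear dependence between the two constraint systems into an ``extra'' Kirchhoff-kernel direction is the core of the proof, and it is exactly the content your proposal defers rather than supplies; without it (or some substitute, e.g.\ a direct proof that $B$ restricted to $\tJ(G')$ has rank $d_2$), the argument is a plan, not a proof.
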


\begin{proof}

By direct computation,
\begin{equation} \label{eq1:codimen_sum}
\big( \tJ (G') \cap \tmJ (G', G) \big)^{\perp}
= \tJ (G')^{\perp} + \tmJ (G', G)^{\perp}.
\end{equation}
This implies that
\begin{equation} \notag
\codim \big( \tJ (G') \cap \tmJ (G', G) \big) 
\leq \dim \big( \tJ (G')^{\perp} \big) + \dim \big( \tmJ (G', G)^{\perp} \big) 
= d_1 + d_2.
\end{equation}

To prove \eqref{eq:codimen_sum}, it suffices to show that the above inequality holds as an equality.
We proceed by contradiction. Suppose that
\begin{equation} \notag
\codim \big( \tJ (G') \cap \tmJ (G', G) \big) 
< \dim \big( \tJ (G')^{\perp} \big) + \dim \big( \tmJ (G', G)^{\perp} \big).
\end{equation}
This, together with \eqref{eq1:codimen_sum}, shows that 
\begin{equation} \label{eq1.5:codimen_sum}
\{ \balpha_1, \ldots, \balpha_{d_1}, \bbeta_1, \ldots, \bbeta_{d_2} \}
\ \text{ are linearly dependent}.
\end{equation}

On the other hand, recall the matrices $\tilde{A}^{(i)} \in \mathbb{R}^{m_i \times r_i}$ for each $1 \leq i \leq \ell$, and the linear conditions \eqref{eq10:codimen_sum} on $\tJ(G')$ in Lemma \ref{lem:intermediate}.
Kirchhoff's junction rules indicate that every linkage class with $m_i$ vertices imposes $(m_i - 1)$ independent constraints among the conditions in \eqref{eq10:codimen_sum}, implying that
\[
\text{the row rank of $\tilde{A}^{(i)}$} = \rank (\tilde{A}^{(i)}) = m_i - 1
\ \text{ for every $1 \leq i \leq \ell$}.
\]
Since $\mathbf{1} = (1, \ldots, 1) \in \mathbb{R}^{m_i}$ belongs to the left null space of $\tilde{A}^{(i)}$ for each $1 \leq i \leq \ell$, it follows that for every $1 \leq i \leq \ell$,
\[
\text{the first $(m_i - 1)$ row vectors of $\tilde{A}^{(i)}$ are linearly independent}.
\]
Moreover, Kirchhoff's junction rules state that the constraints from distinct linkage classes are independent. This implies that the collection of these $(m_i - 1)$ row vectors of $\tilde{A}^{(i)}$ for each $1 \leq i \leq \ell$ forms a basis for $\tJ (G')^{\perp}$.
Without loss of generality, we select this collection of vectors as $\{ \balpha^{\intercal}_1, \ldots, \balpha^{\intercal}_{d_1} \}$, and thus
\begin{equation} \label{eq13.5:codimen_sum}
d_1 = \sum\limits^{\ell}_{i=1} (m_i - 1) = \sum\limits^{\ell}_{i=1} m_i - \ell.
\end{equation}
From \eqref{eq1.5:codimen_sum} and the fact that $\{ \balpha_i \}_{i=1}^{d_1}$ and $\{ \bbeta_j \}_{j=1}^{d_2}$ form bases for $\tJ (G')^{\perp}$ and $\tmJ (G', G)^{\perp}$, respectively, we assume without loss of generality that
\begin{equation} \label{eq14:codimen_sum}
\balpha_{d_1}
\text{ is linearly dependent with  $\{ \balpha_1, \ldots, \balpha_{d_1 - 1}, \bbeta_1, \ldots, \bbeta_{d_2} \}$}. 
\end{equation}
Using \eqref{eq9:codimen_sum} in Lemma \ref{lem:intermediate} and \eqref{eq14:codimen_sum}, we conclude that $\bJ'_{\bw} \in \ker (A)$ if and only if
\begin{equation} \label{eq15:codimen_sum}
\bJ'_{\bw} \in \ker
\begin{pmatrix}
\balpha^{\intercal}_1 \\
\vdots \\
\balpha^{\intercal}_{d_1 - 1}
\end{pmatrix}.
\end{equation}
From \eqref{def3:codimen_sum} in Lemma \ref{lem:intermediate}, the condition in \eqref{eq15:codimen_sum} is equivalent to the following:
\begin{equation} \notag
\tilde{J}_{(d_1 - 1)}
\begin{pmatrix}
w_1 \\
\vdots \\
w_{m}
\end{pmatrix} = \mathbf{0},
\end{equation}
where $\tilde{J}_{(d_1 - 1)} \in \mathbb{R}^{(d_1 - 1) \times m}$ denotes the submatrix consisting of the first $(d_1 - 1)$ row vectors of $\tilde{J}$. This, together with \eqref{def2:codimen_sum} and \eqref{eq15:codimen_sum}, shows that
\begin{equation} \label{eq12:codimen_sum}
\bJ'_{\bw} \in \tJ (G')
\ \text{ if and only if } \
\bw \in \ker (\tilde{J}_{(d_1 - 1)}).
\end{equation}
From $\tilde{J}_{(d_1 - 1)} \in \mathbb{R}^{(d_1 - 1) \times m}$ and \eqref{eq13.5:codimen_sum}, it follows that
\begin{equation} \label{eq13:codimen_sum}
\dim \big( \ker (\tilde{J}_{(d_1 - 1)}) \big) \geq m - (d_1 - 1) = \ell + 1.
\end{equation}

Recall from Lemma~\ref{lem:intermediate} that $\bJ'_{\bw} \in \tJ (G')$ if and only if $\bw \in \ker (\tilde{J})$.
Since $G'$ is weakly reversible and consists of $\ell$ linkage classes with $m$ vertices, Kirchhoff's junction rules show that there exists $\bv_1, \ldots, \bv_{\ell} \in \mathbb{R}^m_{\geq 0}$, such that
\begin{equation} \notag
\ker (\tilde{J}) = \spn \{ \bv_1, \ldots, \bv_{\ell} \}
\ \text{ and } \
\dim \big( \ker (\tilde{J}) \big) = \ell. 
\end{equation}
However, this further implies that $\rank (\tilde{J}) = m - \ell$, and 
\begin{equation} \notag
\bJ'_{\bw} \in \tJ (G')
\ \text{ if and only if } \
\bw \in \spn \{ \bv_1, \ldots, \bv_{\ell} \}.
\end{equation}
This contradicts the conditions specified in \eqref{eq12:codimen_sum} and \eqref{eq13:codimen_sum}. Therefore, we conclude the theorem.
\end{proof}

\begin{theorem}
\label{thm:dim}

Let $G' = (V', E')$ be a weakly reversible E-graph with $\ell$ linkage classes and let $G = (V, E)$ be an E-graph. 
Assume that $\mJ (G', G) \neq \emptyset$, then
\begin{equation} \label{eq:dim}
\dim (\mJ (G', G)) = \dim \big( \tmJ (G', G) \big) - |V'| + \ell.
\end{equation}
\end{theorem}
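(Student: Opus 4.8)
The plan is to combine three ingredients that are already available: Lemma~\ref{lem:jr_g'_g}, which reduces $\dim(\mJ(G',G))$ to the dimension of the linear subspace $\tJ(G')\cap\tmJ(G',G)$ of $\RR^{r}$ (here $r=|E'|$); Theorem~\ref{thm:main}, which says the two defining conditions are independent in codimension; and the explicit value of $\codim(\tJ(G'))$ coming from Kirchhoff's junction rules. First I would note that since $\mJ(G',G)\neq\emptyset$, Lemma~\ref{lem:jr_g'_g}(b) gives $\dim(\mJ(G',G)) = \dim\big(\tJ(G')\cap\tmJ(G',G)\big)$. Working inside $\RR^{r}$ and applying Theorem~\ref{thm:main},
\begin{equation} \notag
\dim\big(\tJ(G')\cap\tmJ(G',G)\big) = r - \codim\big(\tJ(G')\big) - \codim\big(\tmJ(G',G)\big) = \dim\big(\tmJ(G',G)\big) - \codim\big(\tJ(G')\big),
\end{equation}
where in the last step I used $\dim\big(\tmJ(G',G)\big) = r - \codim\big(\tmJ(G',G)\big)$.

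It then remains to identify $\codim(\tJ(G'))$ with $|V'|-\ell$. This is exactly the computation already carried out in the proof of Theorem~\ref{thm:main}: the linear system \eqref{eq10:codimen_sum} defining $\tJ(G')$ decouples across linkage classes, and within the $i$-th linkage class $L_i$ the matrix $\tilde A^{(i)}$ has $\mathbf 1\in\RR^{m_i}$ in its left null space while its first $m_i-1$ rows are independent, so $L_i$ imposes exactly $m_i-1$ independent constraints; since constraints from distinct linkage classes are independent, $\codim(\tJ(G')) = \sum_{i=1}^{\ell}(m_i-1) = |V'| - \ell$ (this is \eqref{eq13.5:codimen_sum}). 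Substituting this into the displayed identity yields
\begin{equation} \notag
\dim(\mJ(G',G)) = \dim\big(\tmJ(G',G)\big) - |V'| + \ell,
\end{equation}
which is \eqref{eq:dim}.

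I do not expect a real obstacle here: the theorem is essentially a bookkeeping corollary of Theorem~\ref{thm:main} and Lemma~\ref{lem:jr_g'_g}, with the only nontrivial input (the codimension of $\tJ(G')$ via Kirchhoff's junction rules) already established en route to Theorem~\ref{thm:main}. The one point worth stating carefully is that the identity $\dim+\codim=r$ is being applied to genuine linear subspaces of a fixed ambient $\RR^{r}$, which is legitimate precisely because Lemma~\ref{lem:jr_g'_g}(a) guarantees $\tJ(G')$, $\tmJ(G',G)$, and their intersection are all linear subspaces of $\RR^{|E'|}$.
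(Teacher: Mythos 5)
Your proposal is correct and follows essentially the same route as the paper's own proof: Lemma~\ref{lem:jr_g'_g} to pass to the linear subspace $\tJ(G')\cap\tmJ(G',G)$, Theorem~\ref{thm:main} to split the codimensions, and the Kirchhoff-rule count \eqref{eq13.5:codimen_sum} giving $\codim(\tJ(G')) = |V'|-\ell$. No gaps to report.
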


\begin{proof}

From Lemma \ref{lem:jr_g'_g}, $\tJ (G')$ and $\tmJ (G', G)$ are both linear subspaces of $\RR^{|E'|}$. Thus,
\[
\tJ (G') \cap \tmJ (G', G)
\ \text{is a linear subspaces of $\RR^{|E'|}$}.
\]
Together with \eqref{eq:codimen_sum} in Theorem \ref{thm:main}, this implies
\begin{equation} \label{eq1:dim}
\begin{split}
\dim \big( \tJ (G') \cap \tmJ (G', G) \big) 
& = |E'| - \codim \big( \tmJ (G', G) \big) - \codim \big( \tJ (G') \big)
\\& = \dim \big( \tmJ (G', G) \big) - \codim \big( \tJ (G') \big).
\end{split}
\end{equation}
From \eqref{eq13.5:codimen_sum}, we deduce that
\begin{equation} \label{eq2:dim}
\codim \big( \tJ (G') \big) = |V'| - \ell.
\end{equation}
Using \eqref{eq2:dim} and \eqref{eq:jr_g'_g} in Lemma \ref{lem:jr_g'_g}, we obtain
\[
\dim (\mJ (G', G)) = \dim \big( \tJ (G') \cap \tmJ (G', G) \big) = \dim \big( \tmJ (G', G) \big) - ( |V'| - \ell ),
\]
and thus we conclude \eqref{eq:dim}.
\end{proof}

We conclude this section by outlining an algorithm to compute $\dim(\tmJ(G', G))$ as given in \eqref{eq:dim} of Theorem \ref{thm:dim}, followed by a proof of its correctness.

\newpage

\begin{breakablealgorithm}
\caption{(Compute the dimension of $\tmJ(G', G)$).}
\label{algo:dim}

\begin{algorithmic}[1]

\State Let $G = (V, E)$ and $G' = (V', E')$ be two E-graphs. 
Assume that $\mJ (G', G) \neq \emptyset$.

\smallskip

\State 
For each vertex $\by \in V'$, collect the reaction vectors in $G$ with source vertex $\by$ and compute their span, denoted by $S_{\by}$.

\smallskip

\State 
Determine a basis for $S_{\by}^{\perp}$, denoted by $\{ \balpha_1, \ldots, \balpha_{p} \} \subset \mathbb{R}^n$.
Construct the matrix:
\begin{equation} \notag
A_{\by} = 
\begin{pmatrix}
\balpha^{\intercal}_1 \\[5pt]
\vdots \\[5pt]
\balpha^{\intercal}_{p}
\end{pmatrix} \in \mathbb{R}^{p \times n}.
\end{equation}

\smallskip

\State 
Collect the reaction vectors with source vertex $\by$ in $G'$, denoted by $\{ \bbeta_1, \ldots, \bbeta_{q} \}$. 
Construct the matrix:
\begin{equation} \notag
B_{\by} = 
\begin{pmatrix}
\bbeta_1 \ \ldots \ \bbeta_{q}
\end{pmatrix} \in \mathbb{R}^{n \times q}.
\end{equation}

\smallskip

\State 
Compute $\dim ( \ker (A_{\by} B_{\by}) )$. Iterate this process for all vertices in $V'$, then
\[
\dim(\tmJ(G', G)) = \sum\limits_{\by \in V'} \dim (\ker (A_{\by} B_{\by} )).
\]
\end{algorithmic}
\end{breakablealgorithm}

\bigskip

Now we prove the correctness of Algorithm~\ref{algo:dim} via the following lemma.

\begin{lemma}
\label{lem:dim_tjr_g'_g}

Let $G = (V, E)$ and $G' = (V', E')$ be two E-graphs. 
Assume $\mJ (G', G) \neq \emptyset$, and follow the notation in Algorithm \ref{algo:dim}. Then
\begin{equation} \label{eq:dim_tjr_g'_g}
\dim(\tmJ(G', G)) = \sum\limits_{\by \in V'} \dim (\ker (A_{\by} B_{\by} )).
\end{equation}
\end{lemma}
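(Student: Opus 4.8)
The plan is to decompose the defining condition of $\tmJ(G', G)$ vertex-by-vertex, exploiting the fact that the $\RR$-realizability condition in Definition~\ref{def:flux_realizable} is a separate linear constraint at each source vertex $\by_0 \in V \cup V'$. First I would observe that a flux vector $\bJ' \in \RR^{|E'|}$ lies in $\tmJ(G', G)$ if and only if, for every vertex $\by \in V'$, the vector $\sum_{\by \to \by' \in E'} J'_{\by \to \by'}(\by' - \by)$ lies in $S_{\by} = \spn\{\by'' - \by : \by \to \by'' \in E\}$ (the span of reaction vectors of $G$ with source $\by$); for vertices $\by \in V \setminus V'$ the left side is empty and the condition is automatically satisfied, so only vertices in $V'$ matter. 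This is exactly the statement that $(G', \bJ')$ is $\RR$-realizable on $G$, since one can independently choose the $G$-fluxes out of each source vertex. Hence $\tmJ(G', G) = \bigoplus_{\by \in V'} \{\bJ'_{(\by)} \in \RR^{|E'_{\by}|} : \sum J'_{\by \to \by'}(\by' - \by) \in S_{\by}\}$, where $E'_{\by}$ denotes the edges of $G'$ with source $\by$, and the direct sum is over the disjoint partition of $E'$ by source vertex.

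Next I would identify each summand with a kernel. Fixing $\by \in V'$, write the $G'$-reaction vectors out of $\by$ as the columns $\bbeta_1, \ldots, \bbeta_q$ of $B_{\by} \in \RR^{n \times q}$, so that $\sum_j J'_j \bbeta_j = B_{\by}\bJ'_{(\by)}$. The condition $B_{\by}\bJ'_{(\by)} \in S_{\by}$ is equivalent to $A_{\by}(B_{\by}\bJ'_{(\by)}) = \mathbf{0}$, where the rows of $A_{\by}$ form a basis of $S_{\by}^{\perp}$ — this is just the characterization of a subspace as the kernel of the matrix whose rows span its orthogonal complement. Therefore the $\by$-summand is precisely $\ker(A_{\by}B_{\by})$, and $\dim \tmJ(G', G) = \sum_{\by \in V'} \dim(\ker(A_{\by}B_{\by}))$ by additivity of dimension over the direct sum decomposition.

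The main thing to be careful about — and I expect this to be the one genuine subtlety rather than the routine linear algebra — is the independence across source vertices: the $\RR$-realizability condition allows the auxiliary flux $\bJ$ on $G$ to be chosen freely and independently at each source vertex, with no global compatibility requirement, because the constraint in Definition~\ref{def:flux_realizable} is imposed separately for each $\by_0$ and the variables $J_{\by_0 \to \by}$ for different $\by_0$ are disjoint. This is what makes $\tmJ(G', G)$ split as an honest direct sum over $V'$ (and why, unlike $\tJ(G')$, there is no Kirchhoff-type coupling among the vertices). I would state this splitting explicitly as the first step, verify that vertices in $V' \setminus V$ and $V \setminus V'$ contribute no constraints (handling the empty-sum convention of footnote~\ref{footnote1}), and then the remaining steps are the two straightforward linear-algebra identifications above, matching the matrices $A_{\by}$, $B_{\by}$ built in Algorithm~\ref{algo:dim}.
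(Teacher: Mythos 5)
Your proposal is correct and follows essentially the same route as the paper's proof: decouple the $\RR$-realizability condition vertex by vertex over the source vertices in $V'$ (vertices of $V\setminus V'$ being handled by the free choice of the auxiliary flux $\bJ$), rewrite the per-vertex condition as membership of the net flux in $S_{\by}$, identify each block with $\ker(A_{\by}B_{\by})$, and add dimensions over the partition of $E'$ by source vertex. The only nitpick is your closing parenthetical: vertices in $V'\setminus V$ do impose constraints (there $S_{\by}=\{\mathbf{0}\}$, forcing the net flux to vanish), but your main argument already handles this correctly through the definition of $S_{\by}$ and $\ker(A_{\by}B_{\by})$.
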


\begin{proof}

From Definitions \ref{def:flux_realizable} and \ref{def:tilde_j_G}, any flux system $(G', \bJ')$ with $\bJ' \in \tmJ(G', G)$ is $\RR$-realizable on $G$. Since $\mJ(G', G) \neq \emptyset$, there exists some $\bJ \in \mathbb{R}^{|E|}$ such that for every vertex $\by_0 \in V \cup V'$,
\begin{equation} \label{def1:tjr_g'_g}
\sum_{\by_0 \to \by' \in E'} J'_{\by_0 \to \by'} (\by' - \by_0)
= \sum_{\by_0 \to \by \in E} J_{\by_0 \to \by} (\by - \by_0).
\end{equation}
Moreover, every vertex in $V \setminus V'$ yields an empty sum from the left-hand side of \eqref{def1:tjr_g'_g}, so the vector $\bJ \in \mathbb{R}^{|E|}$ must yield a zero sum from the right-hand side of \eqref{def1:tjr_g'_g}.

Therefore, it suffices to identify $\bJ' \in \mathbb{R}^{|E'|}$ that satisfies \eqref{def1:tjr_g'_g} for the vertices in $V'$.
Moreover, for any vertex $\by_0 \in V'$, \eqref{def1:tjr_g'_g} is equivalent to 
\begin{equation} \label{eq1:dim_tjr_g'_g}
\sum_{\by_0 \to \by' \in E'} J'_{\by_0 \to \by'} (\by' - \by_0) \in S_{\by_0}.
\end{equation}
Let $\bJ'_{\by} = (\bJ'_{\by \to \by'})_{\by \to \by' \in E'}$ denote the components of $\bJ'$ corresponding to the reactions with the source vertex $\by$. Thus, condition \eqref{eq1:dim_tjr_g'_g} can be rewritten as
\begin{equation} \label{eq2:dim_tjr_g'_g}
\sum_{\by_0 \to \by' \in E'} J'_{\by_0 \to \by'} (\by' - \by_0) 
= B_{\by_0} \bJ'_{\by_0} \in \ker (A_{\by_0}).
\end{equation}
This implies that $ \bJ'_{\by_0} \in \ker (A_{\by_0} B_{\by_0})$.
Since $\bJ'$ is a collection of all components corresponding to the source vertices in $V'$, and these components partition $\bJ'$, it follows that \eqref{eq:dim_tjr_g'_g} holds.
\end{proof}

\begin{remark}

Given two E-graphs $G$ and $G'$, the set $\tmJ(G', G)$ collects the flux vectors $\bJ'$ of $G'$ such that $(G', \bJ')$ is $\RR$-realizable on $G$. Thus, every vertex $\by_0 \in V'$ satisfies that
\begin{equation} \notag
\sum_{\by_0 \to \by \in E} J_{\by_0 \to \by} 
(\by - \by_0) 
= \sum_{\by_0 \to \by' \in E'} J'_{\by_0 \to \by'} 
(\by' - \by_0).
\end{equation}
This implies that the \emph{net flux vector} from every vertex in $V'$ needs to be $\RR$-realizable on $G$.
The result of Lemma \ref{lem:dim_tjr_g'_g} can be interpreted geometrically. Due to dynamical equivalence and the differences in reactions between two E-graphs, each vertex in $V'$ possesses a certain degree of freedom regarding its realizability in $G$. The dimension of $\tmJ(G', G)$ represents the aggregate degrees of freedom across all vertices in $V'$.

\smallskip

For example, consider the two E-graphs $G, G'$ in Figure \ref{fig:bruseelator} of Example \ref{ex:Brusselator}. It can be verified that $\mJ (G', G) \neq \emptyset$.
\begin{enumerate}
\item[(a)] For vertices $X, 3Y, X+2Y$, the reactions originating from them are identical in both $G$ and $G'$. Thus, the degrees of freedom in selecting vectors correspond to the total number of edges originating from these vertices (i.e., $1+1+2=4$).

\item[(b)] For vertex $Y$, there are two more reactions originating from it in $G'$ than in $G$, namely $Y \to 3Y$ and $Y \to X$.
To ensure that the net flux vector from $Y$ is $\RR$-realizable on $G$, the reaction rate constants $k_{Y \to 3Y}$ and $k_{Y \to X}$ must satisfy a proportionality constraint. Consequently, the degrees of freedom in selecting vectors correspond to the number of edges from $Y$ in $G'$ decreased by one due to the restriction (i.e., $3-1=2$).
\end{enumerate}
Finally, the dimension of $\tmJ(G', G)$  is the sum of the degrees of freedom from all vertices, which gives $4+2=6$ (see more details in Example \ref{ex:Brusselator}).
\end{remark}

\section{Applications}
\label{sec:applications}

In this section, the results on the disguised toric locus are applied to examples motivated by biochemical and cellular processes.
Specifically, the dimension of the disguised toric locus is explicitly computed for these examples, revealing that it attains full dimension within the corresponding parameter space.

\begin{example}[Brusselator-type models {\cite{brusselator,prigogine1968symmetry,boros2022limit}}]
\label{ex:Brusselator}

The Brusselator-type models are used to describe autocatalytic reactions. 
A specific modification of this model, known as the Selkov model, is found in the glycolysis pathway and is governed by the following reactions:
\begin{equation} \notag
X \rightleftharpoons X + 2Y, \ \
3Y \rightarrow Y \rightarrow X + 2Y \rightarrow 3Y, 
\end{equation}
where $X$ denotes Glucose, and $Y$ denotes ADP. This network is illustrated in Figure~\ref{fig:bruseelator}(a) and is denoted by $G$.
We further consider a weakly reversible E-graph $G' \sqsubseteq G_{c}$ in Figure~\ref{fig:bruseelator}(b) and compute $\dim(\dK(G, G'))$.

\begin{figure}[!ht]
\centering
\includegraphics[scale=0.4]{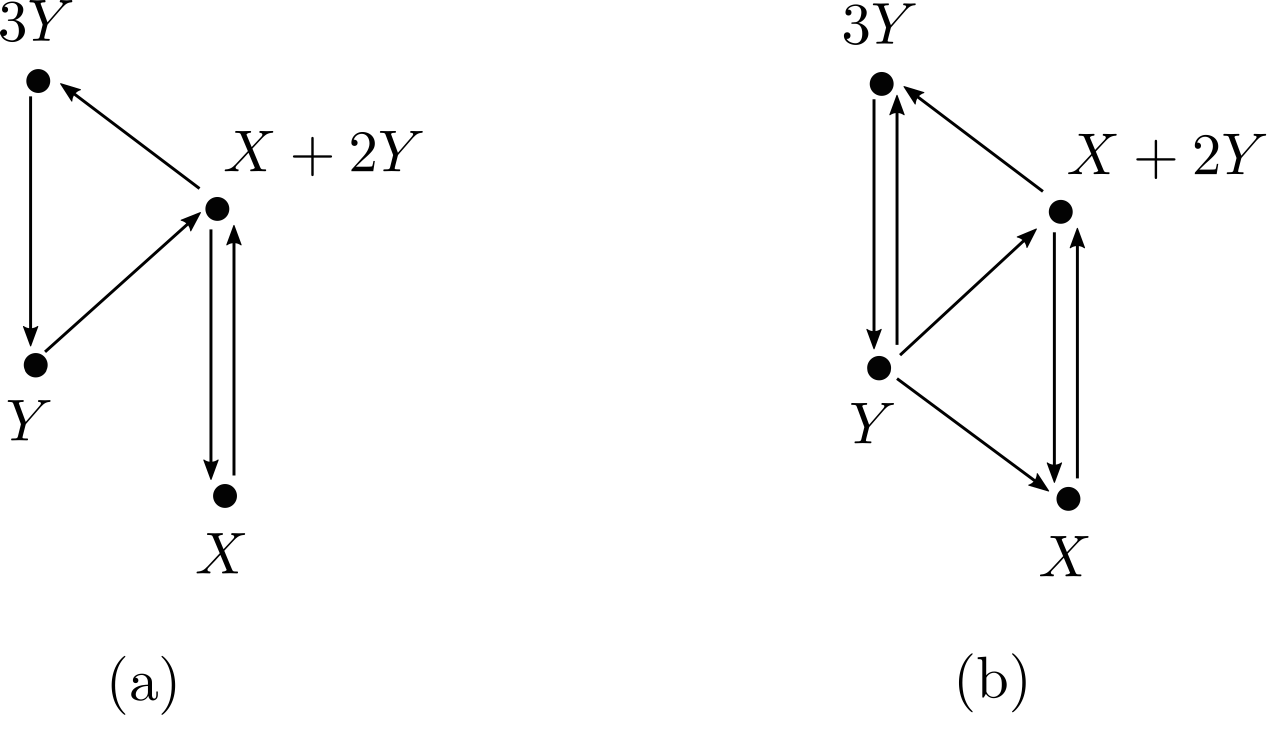}
\caption{(a) A version of the Selkov model, denoted by $G = (V, E)$. 
(b) The network $G' = (V', E')$, a weakly reversible subgraph of the complete graph associated with the source vertices of $G$.}
\label{fig:bruseelator}
\end{figure} 

We start with computing $\dim(\tmJ (G', G))$. Lemma~\ref{lem:dim_tjr_g'_g} shows that
\begin{equation} \notag
\dim(\tmJ(G', G)) = \sum\limits_{\by \in V'} \dim (\ker (A_{\by} B_{\by} )).
\end{equation}
Based on two E-graphs $G$ and $G'$ in Figure~\ref{fig:bruseelator}, we compute
\begin{equation} \notag
\begin{split}
A_X B_X = 
\begin{pmatrix}
1 & 0 
\end{pmatrix} 
\begin{pmatrix}
0 \\
2
\end{pmatrix}
= 0, &
\ \
\dim(\ker(A_X B_X)) = 1,
\\ A_Y B_Y = 
\begin{pmatrix}
-1 & 1 
\end{pmatrix} 
\begin{pmatrix}
1 & 1 & 0\\
-1 & 1 & 2
\end{pmatrix}
= 
\begin{pmatrix}
-2 & 0 & 2
\end{pmatrix}, &
\ \
\dim(\ker(A_Y B_Y)) = 2,
\\ A_{X+2Y} B_{X+2Y} = 
\begin{pmatrix}
0 & 0 
\end{pmatrix} 
\begin{pmatrix}
0 & -1 \\
-2 & 1
\end{pmatrix}
= 
\begin{pmatrix}
0 & 0
\end{pmatrix}, &
\ \
\dim(\ker(A_{X+2Y} B_{X+2Y})) = 2,
\\ A_{3Y} B_{3Y} = 
\begin{pmatrix}
1 & 0 
\end{pmatrix} 
\begin{pmatrix}
0 \\
-1
\end{pmatrix}
= 0, &
\ \
\dim(\ker(A_{3Y} B_{3Y})) = 1.
\end{split}
\end{equation}
This indicates that
\begin{equation} \notag
\begin{split}
\dim(\tmJ (G', G)) = 1 + 2 + 2 + 1 = 6.
\end{split}
\end{equation}
We then apply Theorem~\ref{thm:dim} to $G'$ and obtain
\[
\dim (\mJ(G', G)) 
= \dim \big( \tmJ (G', G) \big) - |V'| + \ell
= 6 - 4 + 1 = 3.
\]
Using Definition \ref{def:mas_ds}, Remarks \ref{rmk:d0} and \ref{rmk:j0}, we compute
\[
\dim (\mathcal{S}_{G'}) = 2, \ \
\dim (\mD (G)) = 0, \ \
\dim (\eJ(G')) =  0.
\]
It can be checked that $\pK (G, G') \neq \emptyset$. From Theorem \ref{thm:dim_kisg}, we derive that
\begin{equation}
\begin{split} \notag
\dim(\pK (G, G')) 
& = \dim(\dK(G, G')) 
\\& = \dim (\mJ(G', G)) + \dim (\mathcal{S}_{G'}) + \dim(\mD (G)) - \dim(\eJ (G')) = 5.
\end{split}
\end{equation}
Since $\pK (G) \subseteq \mathbb{R}^5_{>0}$, Theorem \ref{thm:dim_kisg_main} implies that $\dim (\pK (G)) = 5$.
\qed
\end{example}

\begin{example}[Thomas-type models {\cite[Chapter 6]{murray2002introduction}}]
\label{ex:thomas}

The Thomas-type models describe the catalytic reaction between uric acid and oxygen, with the enzyme uricase serving as a catalyst in this process. The model is governed by the following reactions:
\begin{equation} \notag
Y \rightleftharpoons \emptyset \rightleftharpoons X, \ \
Y \leftarrow X + Y \rightarrow X, 
\end{equation}
where $X$ denotes uric acid, and $Y$ denotes oxygen. This network is illustrated in Figure~\ref{fig:thomas_model}(a) and is denoted by $G$.
We further consider a weakly reversible E-graph $G' \sqsubseteq G_{c}$ in Figure~\ref{fig:thomas_model}(b) and compute $\dim(\dK(G, G'))$.

\begin{figure}[!h]
\centering
\includegraphics[scale=0.6]{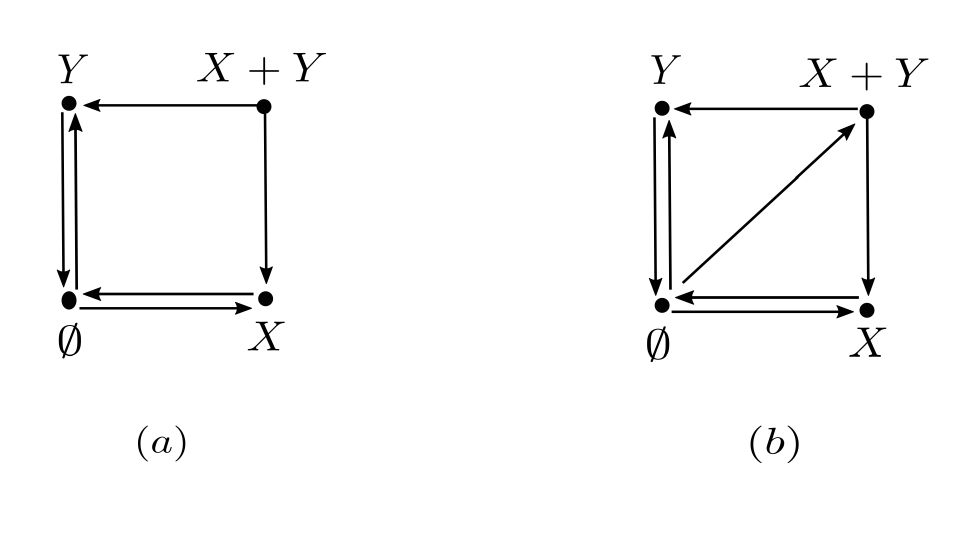}
\caption{(a) The Thomas-type model, denoted by $G = (V, E)$. 
(b) The network $G' = (V', E')$, a weakly reversible subgraph of the complete graph associated with the source vertices of $G$.}
\label{fig:thomas_model}
\end{figure}

Lemma~\ref{lem:dim_tjr_g'_g} shows that
\begin{equation} \notag
\dim(\tmJ(G', G)) = \sum\limits_{\by \in V'} \dim (\ker (A_{\by} B_{\by} )).
\end{equation}
Based on two E-graphs $G$ and $G'$ in Figure~\ref{fig:thomas_model}, we compute
\begin{equation} \notag
\begin{split}
A_\emptyset B_\emptyset = 
\begin{pmatrix}
0 & 0
\end{pmatrix} 
\begin{pmatrix}
1 & 0 & 1\\
0 & 1 & 1
\end{pmatrix}
= \begin{pmatrix}
0 & 0 & 0
\end{pmatrix}, &
\ \
\dim(\ker(A_\emptyset B_\emptyset)) = 3,
\\ A_X B_X = 
\begin{pmatrix}
0 & 1 
\end{pmatrix} 
\begin{pmatrix}
-1 \\
0
\end{pmatrix}
= 0, &
\ \
\dim(\ker(A_X B_X)) = 1,
\\ A_Y B_Y = 
\begin{pmatrix}
1 & 0 
\end{pmatrix} 
\begin{pmatrix}
0\\
-1
\end{pmatrix}
= 
\begin{pmatrix}
0
\end{pmatrix}, &
\ \
\dim(\ker(A_Y B_Y)) = 1,
\\ A_{X+Y} B_{X+Y} = 
\begin{pmatrix}
0 & 0 
\end{pmatrix} 
\begin{pmatrix}
-1 & 0\\
0 & -1
\end{pmatrix}
= 
\begin{pmatrix}
0 & 0
\end{pmatrix}, &
\ \
\dim(\ker(A_{X+2Y} B_{X+2Y})) = 2.
\end{split}
\end{equation}
This indicates that
\begin{equation} \notag
\begin{split}
\dim(\tmJ (G', G)) = 3 + 1 + 1 + 2 = 7.
\end{split}
\end{equation}
We then apply Theorem~\ref{thm:dim} to $G'$ and obtain
\[
\dim (\mJ(G', G)) 
= \dim \big( \tmJ (G', G) \big) - |V'| + \ell
= 7 - 4 + 1 = 4.
\]
Using Definition \ref{def:mas_ds}, Remarks \ref{rmk:d0} and \ref{rmk:j0}, we compute
\[
\dim (\mathcal{S}_{G'}) = 2, \ \
\dim (\mD (G)) = 0, \ \
\dim (\eJ(G')) =  0.
\]
It can be checked that $\pK (G, G') \neq \emptyset$. From Theorem \ref{thm:dim_kisg}, we derive that
\begin{equation}
\begin{split} \notag
\dim(\pK (G, G')) 
& = \dim(\dK(G, G')) 
\\& = \dim (\mJ(G', G)) + \dim (\mathcal{S}_{G'}) + \dim(\mD (G)) - \dim(\eJ (G')) = 6.
\end{split}
\end{equation}
Since $\pK (G) \subseteq \mathbb{R}^6_{>0}$, Theorem \ref{thm:dim_kisg_main} implies that $\dim (\pK (G)) = 6$.
\qed
\end{example}

\begin{example}[Circadian clock models \cite{leloup1999chaos}]
\label{ex:circadian}

The circadian clock regulates the body’s rhythms over a 24-hour cycle. A specific model of the circadian clock, introduced in~\cite{leloup1999chaos}, is governed by the following reactions:
\begin{equation} \notag
P + T \rightleftharpoons C \rightarrow \emptyset, \ \
P \rightleftharpoons \emptyset \rightleftharpoons T,
\end{equation}
where $P$ denotes period, $T$ denotes time, and $C$ denotes the period-time complex.
This network is illustrated in Figure~\ref{fig:circadian_clock}(a) and is denoted by $G$.
We further consider a weakly reversible E-graph $G' \sqsubseteq G_{c}$ in Figure~\ref{fig:circadian_clock}(b) and compute $\dim(\dK(G, G'))$.

\begin{figure}[!ht]
\centering
\includegraphics[scale=0.45]{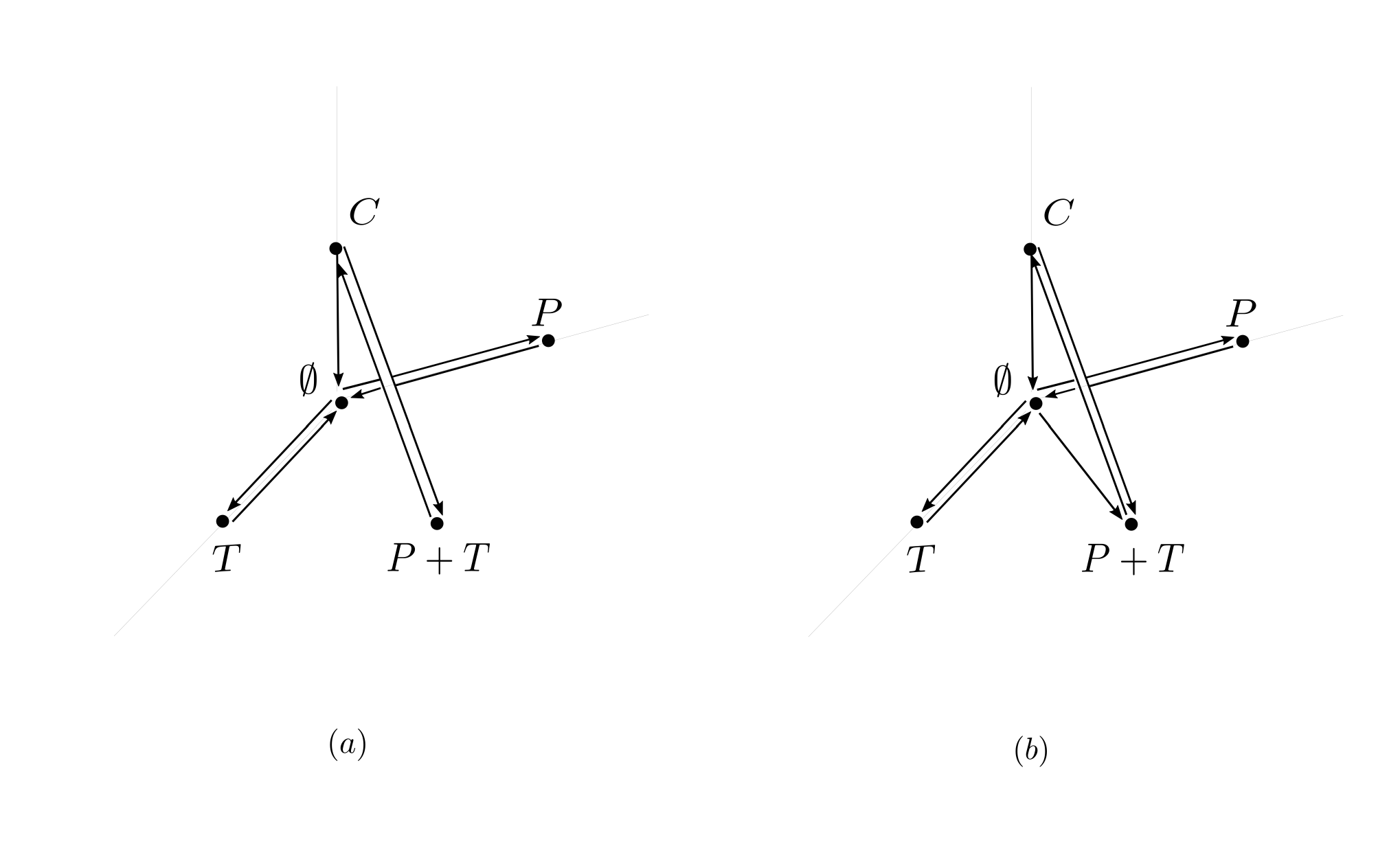}
\caption{(a) The circadian clock model, denoted by $G = (V, E)$. (b) The network $G' = (V', E')$, a weakly reversible subgraph of the complete graph associated with the source vertices of $G$.}
\label{fig:circadian_clock}
\end{figure}

Lemma~\ref{lem:dim_tjr_g'_g} shows that
\begin{equation} \notag
\dim(\tmJ(G', G)) = \sum\limits_{\by \in V'} \dim (\ker (A_{\by} B_{\by} )).
\end{equation}
Based on two E-graphs $G$ and $G'$ in Figure~\ref{fig:circadian_clock}, we compute
\begin{equation} \notag
\begin{split}
A_\emptyset B_\emptyset = 
\begin{pmatrix}
0 & 0 & 1
\end{pmatrix} 
\begin{pmatrix}
1 & 0 & 1\\
0 & 1 & 1\\
0 & 0 & 0
\end{pmatrix}
= \begin{pmatrix}
0 & 0 & 0
\end{pmatrix}, &
\ \
\dim(\ker(A_\emptyset B_\emptyset)) = 3,
\\ A_P B_P = 
\begin{pmatrix}
0 & 1 & 0\\
0 & 0 & 1
\end{pmatrix} 
\begin{pmatrix}
-1 \\
0 \\
0
\end{pmatrix}
= \begin{pmatrix}
0 \\
0
\end{pmatrix}, &
\ \
\dim(\ker(A_P B_P)) = 1,
\\ A_T B_T = 
\begin{pmatrix}
1 & 0 & 0\\
0 & 0 & 1
\end{pmatrix} 
\begin{pmatrix}
0\\
-1\\
0
\end{pmatrix}
= 
\begin{pmatrix}
0 \\
0
\end{pmatrix}, &
\ \
\dim(\ker(A_T B_T)) = 1,
\end{split}
\end{equation}
and
\begin{equation} \notag
\begin{split}
A_C B_C = 
\begin{pmatrix}
1 & -1 & 0
\end{pmatrix} 
\begin{pmatrix}
0 & 1 \\
0 & 1 \\
-1 & -1
\end{pmatrix}
= 
\begin{pmatrix}
0 & 0
\end{pmatrix}, &
\ \
\dim(\ker(A_C B_C)) = 2,
\\ A_{P+T} B_{P+T} = 
\begin{pmatrix}
-1 & 1 & 0\\
1 & 0 & 1
\end{pmatrix}
\begin{pmatrix}
-1 \\
-1 \\
1
\end{pmatrix} 
= 
\begin{pmatrix}
0 \\
0
\end{pmatrix}, &
\ \
\dim(\ker(A_{P+T} B_{P+T})) = 1.
\end{split}
\end{equation}
This indicates that
\begin{equation} \notag
\begin{split}
\dim(\tmJ (G', G)) = 3 + 1 + 1 + 2 + 1 = 8.
\end{split}
\end{equation}
We then apply Theorem~\ref{thm:dim} to $G'$ and obtain
\[
\dim (\mJ(G', G)) 
= \dim \big( \tmJ (G', G) \big) - |V'| + \ell
= 8 - 5 + 1 = 4.
\]
Using Definition \ref{def:mas_ds}, Remarks \ref{rmk:d0} and \ref{rmk:j0}, we compute
\[
\dim (\mathcal{S}_{G'}) = 3, \ \
\dim (\mD (G)) = 0, \ \
\dim (\eJ(G')) =  0.
\]
It can be checked that $\pK (G, G') \neq \emptyset$. From Theorem \ref{thm:dim_kisg}, we derive that
\begin{equation}
\begin{split} \notag
\dim(\pK (G, G')) 
& = \dim(\dK(G, G')) 
\\& = \dim (\mJ(G', G)) + \dim (\mathcal{S}_{G'}) + \dim(\mD (G)) - \dim(\eJ (G')) = 7.
\end{split}
\end{equation}
Since $\pK (G) \subseteq \mathbb{R}^7_{>0}$, Theorem \ref{thm:dim_kisg_main} implies that $\dim (\pK (G)) = 7$.
\qed
\end{example}

\section{Discussion}
\label{sec:discussion}

In this paper, we have developed methods to compute the dimension of the disguised toric locus of a reaction network. 
Our results build upon and advance prior work \cite{disg_3, disg_1, disg_2}. 
Notably, Theorem \ref{thm:main} shows the independence of the two key factors (complex-balanced conditions and $\RR$-realizable conditions) when determining $\dim(\mJ(G', G))$.
To demonstrate the applicability of our methods, we applied it to several models, including Brusselator-type models, Thomas-type models, and Circadian clock models, all of which have significant biological relevance.

While this work focuses primarily on computation, the approach introduced here opens several directions for future research. One promising direction is to identify conditions under which the dimension of the disguised toric locus of a reaction network attains full dimension in the parameter space. Recent work~\cite{disg_3} provided a formula for the dimension of the disguised toric locus. Building on this, we aim to establish the conditions on network structure that ensure the disguised toric locus forms an open subset in the parameter space.

Another potential research direction involves characterizing the conditions under which the disguised toric locus is a smooth manifold. In \cite{smooth2024}, it was established that the toric locus of a reaction network is a smooth manifold. 
Extending this result to the disguised toric locus can help in exploring the regularity of steady states within the disguised toric locus.

\section*{Acknowledgements}

This work was supported in part by the National Science Foundation grant DMS-2051568.

\bibliographystyle{unsrt}
\bibliography{Bibliography}

\end{document}